\documentclass[a4paper]{article}

\usepackage[utf8]{inputenc}
\usepackage{lmodern}
\usepackage{geometry}
\geometry{left=3cm,right=3cm,top=2.5cm} 
\usepackage{array}
\usepackage{graphicx}
\usepackage{amssymb, amsfonts, amsthm, amsmath}
\usepackage[english]{babel}
\usepackage[pdfborder={0 0 0}]{hyperref}
\usepackage{tikz}
\usetikzlibrary{decorations}
\usetikzlibrary{decorations.pathreplacing}
\tikzset{individu/.style={draw,thick}}
\usepackage{pgfplots}

\usepackage{subfigure}

\theoremstyle{plain}
\newtheorem{theorem}{Theorem}[section]

\newtheorem{lemma}[theorem]{Lemma}
\newtheorem{proposition}[theorem]{Proposition}

\renewcommand{\theotherthm}{\Alph{otherthm}}

\theoremstyle{definition}

\theoremstyle{remark}

\newtheorem{example}[theorem]{Example}

\newcommand{\N}{\mathbb{N}}
\newcommand{\Z}{\mathbb{Z}}
\newcommand{\R}{\mathbb{R}}
\newcommand{\C}{\mathbb{C}}
\newcommand{\calR}{\mathcal{R}}

\newcommand{\ind}[1]{\mathbf{1}_{\left\{#1\right\}}}
\newcommand{\indset}[1]{\mathbf{1}_{#1}}
\newcommand{\floor}[1]{{\left\lfloor #1 \right\rfloor}}
\newcommand{\ceil}[1]{{\left\lceil #1 \right\rceil}}
\newcommand{\norme}[1]{{\left\Vert #1 \right\Vert}}
\newcommand{\calC}{\mathcal{C}}

\numberwithin{equation}{section}

\DeclareMathOperator{\E}{\mathbb{E}}
\newcommand{\Q}{\mathbb{Q}}
\renewcommand{\P}{\mathbb{P}}
\DeclareMathOperator{\Var}{\mathbb{V}\mathrm{ar}}

\newcommand{\bbT}{\mathbb{T}}

\newcommand{\calF}{\mathcal{F}}
\newcommand{\calE}{\mathcal{E}}
\newcommand{\calL}{\mathcal{L}}
\newcommand{\calU}{\mathcal{U}}
\newcommand{\calT}{\mathcal{T}}
\newcommand{\calG}{\mathcal{G}}
\newcommand{\calA}{\mathcal{A}}
\newcommand{\calB}{\mathcal{B}}

\newcommand{\crochet}[1]{{\langle #1 \rangle}}

\newcommand{\e}{\mathbf{e}}
\newcommand{\W}{\mathbb{W}}

\newcommand{\conv}[2]{{\underset{#1\to #2}{\longrightarrow}}}
\newcommand{\wconv}[2]{{\underset{#1\to #2}{\Longrightarrow}}}
\newcommand{\pconv}[2]{{\underset{#1\to #2}{\overset{\mathbb{P}}{\longrightarrow}}}}
\newcommand{\convn}{{\underset{n\to +\infty}{\longrightarrow}}}
\newcommand{\wconvn}{{\underset{n\to +\infty}{\Longrightarrow}}}
\newcommand{\pconvn}{{\underset{n \to +\infty}{\overset{\mathbb{P}}{\longrightarrow}}}}

\renewcommand{\bar}[1]{\overline{#1}}

\newcommand{\egaldistr}{{\overset{(d)}{=}}}
\DeclareMathOperator{\sgn}{sgn}
\newcommand{\esssup}{\mathrm{esssup}}

\title{Asymptotic of the maximal displacement in a branching~random~walk}
\author{Bastien Mallein}
\date{\today}

\newcommand{\T}{\mathbf{T}}
\renewcommand{\tilde}[1]{\widetilde{#1}}
\renewcommand{\hat}[1]{\widehat{#1}}

\renewcommand{\rho}{\varrho}
\renewcommand{\epsilon}{\varepsilon}
\newcommand{\triangleBRW}[2]{	\shade [top color = black!50, bottom color = white] (#1-0.75,-8) -- (#1,-#2-#2-0.1) -- (#1+0.75,-8) -- cycle;
	\draw [dotted] (#1+0.75,-8) -- (#1-0.75,-8);
	\draw [color=blue!50, thick] (#1-0.75,-8) -- (#1,-#2-#2-0.1) -- (#1+0.75,-8);
	\draw [color=blue!50] (#1,-#2-#2) node {$\bullet$} ;
	\draw (#1,-7.5) node {$\P_{\cdot}$} ;
}

\begin{document}

\maketitle

\begin{abstract}
In this article, we study the maximal displacement in a branching random walk. We prove that its asymptotic behaviour consists in a first almost sure ballistic term, a negative logarithmic correction in probability and stochastically bounded fluctuations. This result, proved in \cite{HuS09} and \cite{ABR09} is given here under close-to-optimal integrability conditions. Borrowing ideas from \cite{AiS10} and \cite{Rob16}, we provide simple proofs for this result, also deducing the genealogical structure of the individuals that are close to the maximal displacement.
\end{abstract}

\section{Introduction}
\label{sec:introduction}

A branching random walk on $\R$ is a particle process that evolves as follows. It starts with a unique individual located at the origin at time 0. At each time $k$, each individual alive in the process dies, while giving birth to a random number of children, that are positioned around their parent according to i.i.d. version of a point process. We denote by $\T$ the genealogical tree of the process. For any $u \in \T$, we write $V(u)$ for the position of $u$ and $|u|$ for the generation to which $u$ belongs. The quantity of interest is the maximal displacement $M_n = \max_{|u|=n} V(u)$ at time $n$ in the process.

Under sufficient integrability conditions, the asymptotic behaviour of $M_n$ is fully known. Hammersley \cite{Ham74}, Kingman \cite{Kin75} and Biggins \cite{Big76} proved it grows almost surely at linear speed. In 2009, Hu and Shi \cite{HuS09} exhibited a logarithmic correction in probability, with almost sure fluctuations; and Addario-Berry and Reed \cite{ABR09} showed the tightness of the maximal displacement, shifted around its median. More recently, Aïdékon \cite{Aid13} proved the fluctuations converge in law to some random shift of a Gumbel variable, under integrability conditions that Chen \cite{Che15} proved to be optimal.

Aïdékon and Shi gave in \cite{AiS10} a simple method to obtain the asymptotic behaviour of the maximal displacement. They bounded the maximal displacement by computing the number of individuals that cross a linear boundary. However, the integrability conditions they provided were not optimal, and the fluctuations they obtained were up to $o(\log n)$ order. With a slight refinement of their method, we compute the asymptotic behaviour up to terms of order 1.

We were able to obtain the asymptotic of the maximal displacement up to a term of order 1 by choosing a bended boundary for the study of the branching random walk, a method already used by Roberts \cite{Rob16} for the related model of the branching Brownian motion. This idea follows from a bootstrap heuristic argument, which is explained in Section~\ref{sec:tailbehaviour}. The close-to-optimal integrability conditions arise naturally using the well-known spinal decomposition, introduced by Lyons in \cite{Lyo97}, and recalled in Section~\ref{sec:spinaldecomposition}.

We introduce some notation. In the rest of the article, $c,C$ are two positive constants, respectively small enough and large enough, which may change from line to line, and depend only on the law of the random variables we consider. For a given sequence of random variables $(X_n,n \geq 1)$ we write $X_n = O_\P(1)$ if the sequence is tensed, i.e. $\lim_{K \to +\infty} \sup_{n \geq 1} \P\left( |X_n| \geq K \right) = 0$. Moreover, we always assume the convention $\max \emptyset = -\infty$ and $\min \emptyset = +\infty$. For any $u \in \R$, we write $u_+=\max(u,0)$ and $\log_+ u = (\log u)_+$.

The main assumptions on the branching random walk $(\T,V)$ that we consider are the following. The Galton-Watson tree $\T$ of the branching random walk is supercritical:
\begin{equation}
  \label{eqn:supercritical}
  \E\left( \sum_{|u|=1} 1 \right) > 1 \quad \text{and we write $S = \{\# \T = +\infty\}$ for the survival event}.
\end{equation}
The relative displacements $V(u)$ of the children $u$ are in the so-called (see \cite{BiK05}) boundary case
\begin{equation}
  \label{eqn:boundary}
  \E\left( \sum_{|u|=1} e^{V(u)} \right) = 1 , \, \E\left( \sum_{|u|=1} V(u) e^{V(u)} \right) = 0 \text{ and }  \sigma^2 := \E\left( \sum_{|u|=1} V(u)^2 e^{V(u)} \right) < +\infty.
\end{equation}
For any branching random walk satisfying \eqref{eqn:boundary}, we have $\lim_{n \to +\infty} M_n/n = 0$ a.s. on $S$. Any branching random walk with mild integrability assumption can be normalized to satisfy these inequalities, see e.g. Bérard and Gouéré \cite{BeG11}. We also assume that
\begin{equation}
  \label{eqn:spine}
    \E\left[ \sum_{|u|=1} e^{V(u)} \log_+ \left( \sum_{|v|=1} (1 + V(v)_+) e^{V(v)}\right)^2 \right] < +\infty,
\end{equation}
which is a standard assumption to the study of the maximal displacement.

\begin{figure}[ht]
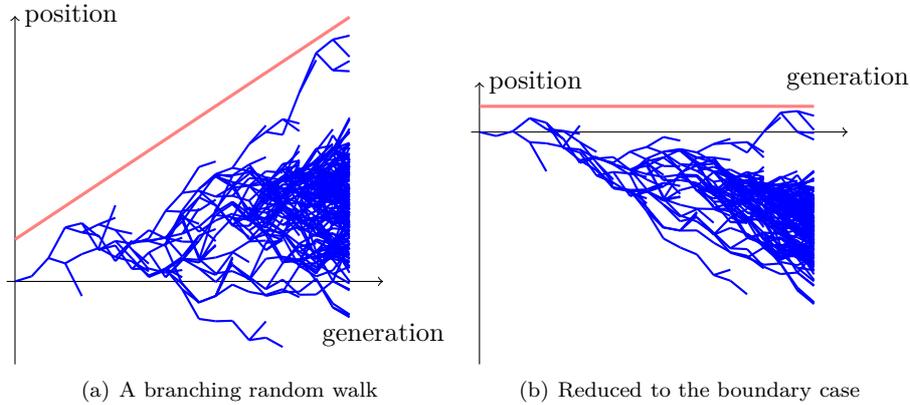

\centering
\subfigure[A branching random walk]{
}
\caption{A branching random walk on $\R$}
\end{figure}

The main result of the article is the following.
\begin{theorem}
\label{thm:main}
Under the assumptions \eqref{eqn:supercritical}, \eqref{eqn:boundary} and \eqref{eqn:spine}, we have
\[M_n = - \frac{3}{2} \log n + O_\P(1) \quad \text{ on } S.\]
\end{theorem}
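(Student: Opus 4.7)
The conclusion reduces to showing the two one-sided tightness statements
\[
\lim_{K \to +\infty} \sup_{n \geq 1} \P\!\left( M_n \geq -\tfrac{3}{2}\log n + K \right) = 0
\quad \text{and} \quad
\lim_{K \to +\infty} \sup_{n \geq 1} \P\!\left( S \cap \{M_n \leq -\tfrac{3}{2}\log n - K\}\right) = 0.
\]
Both halves are to be proved by analysing the particles whose ancestral trajectories are forced to stay below a suitably bent barrier $\phi_n$. The bending of the barrier, borrowed from Roberts' treatment of branching Brownian motion \cite{Rob16}, is what upgrades the $o(\log n)$ fluctuations obtained by Aïdékon and Shi \cite{AiS10} to $O_\P(1)$. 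The bootstrap heuristic mentioned in the introduction dictates the shape: extremal trajectories should behave like random-walk bridges from $0$ to $-\frac{3}{2}\log n$, so a natural choice is $\phi_n(k) = K + \alpha \log\!\left(1 + k(n-k)/n\right)$ with $\alpha$ large.

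\textbf{Upper bound.} Set $a_n(K) = -\frac{3}{2}\log n + K$. The plan is to bound, by Markov's inequality, the quantity $Y_n^K := \#\{u : |u|=n,\ V(u) \geq a_n(K),\ V(u_j) \leq \phi_n(j)\text{ for all } j \leq n\}$ by its mean, and to control separately the expected number of particles whose trajectory exits the barrier before generation $n$. The many-to-one lemma (from the spinal decomposition section) converts $\E[Y_n^K]$ into
\[
\E\!\left[ e^{-S_n} \ind{S_n \geq a_n(K),\ S_j \leq \phi_n(j)\ \forall j \leq n} \right],
\]
where $(S_j)$ is the centred random walk of variance $\sigma^2$ attached to the boundary case. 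A Mogulskii/ballot estimate for bridges constrained below a logarithmically bent barrier then yields $\E[Y_n^K] \leq C e^{-K}$. The expected number of exits of $\phi_n$ is handled by a similar many-to-one applied at the first upcrossing time, yielding a bound of the same order.

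\textbf{Lower bound.} It suffices to produce, on $S$, at least one particle above $a_n(-K)$ with probability $\geq 1 - \eta(K)$. I would let $Z_n^K$ be the number of particles at generation $n$ landing in $[a_n(-K), a_n(-K)+1]$ whose ancestral trajectory stays below the same barrier $\phi_n$. The many-to-one lemma and a standard bridge lower bound give $\E[Z_n^K] \geq c(K) > 0$, and Paley--Zygmund reduces the task to an upper bound on $\E[(Z_n^K)^2]$. One decomposes the second moment along the most recent common ancestor of two particles, applies the size-biased two-spine change of measure, and reduces the cross term to a random-walk sum in which the offspring distribution at the branching event appears. Assumption~\eqref{eqn:spine} is precisely what makes this sum converge uniformly in $n$. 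Paley--Zygmund then yields $\P(Z_n^K \geq 1) \geq c'(K) > 0$ on a fixed large-population event, which is amplified to $1-\eta(K)$ on $S$ by restarting the branching random walk at a generation where the population is already large.

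\textbf{Main obstacle.} The key technical difficulty is the second-moment estimate under the near-optimal condition~\eqref{eqn:spine}: naïve moment bounds require stronger integrability, and avoiding them forces a delicate truncation of the offspring point process at the spinal branching event, balanced against the $\log^2_+$ weight in~\eqref{eqn:spine}. The design of $\phi_n$ is the other subtle point: the barrier must bend enough that the ballot estimate produces the sharp $n^{-3/2}$ factor, yet remain flat enough (growing only logarithmically) that extremal trajectories still lie below it with positive probability --- and it is this balance that fixes the constant $\tfrac{3}{2}$.
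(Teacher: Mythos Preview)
Your overall architecture matches the paper's: bent barrier, many-to-one for the first moment, spinal second moment with a truncation on the offspring functional to exploit \eqref{eqn:spine}, Cauchy--Schwarz/Paley--Zygmund, and amplification via exponential population growth on $S$. The paper packages the first four steps as a tail estimate $c(1+y)e^{-y}\le \P(M_n\ge m_n+y)\le C(1+y)e^{-y}$ (Theorem~\ref{thm:tailestimate}) and then deduces Theorem~\ref{thm:main} exactly by your restart argument.

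There is, however, a concrete gap in your barrier choice. With $\phi_n(k)=K+\alpha\log\bigl(1+k(n-k)/n\bigr)$ you have $\phi_n(n)=K$, so the barrier endpoint sits at distance $\tfrac{3}{2}\log n$ \emph{above} the target level $a_n(K)=m_n+K$. Computing $\E[Y_n^K]$ by many-to-one and summing over the value of $S_n$ near $m_n+K+h$, the excursion estimate produces a factor $1+(\phi_n(n)-m_n-K-h)_+=1+(\tfrac{3}{2}\log n-h)_+$, and hence
\[
\E[Y_n^K]\;\le\;C(1+K)e^{-K}\sum_{h\ge 0}e^{-h}\bigl(1+(\tfrac{3}{2}\log n-h)_+\bigr)\;\asymp\;C(1+K)e^{-K}\log n,
\]
which is not bounded in $n$. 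The point is that the barrier must \emph{end} at $m_n+O(1)$, not at $O(1)$: this is what makes the crossing event $\{M_n\ge m_n+y\}\subset\{\exists u:V(u)\ge f_n(|u|)+y\}$ hold and what kills the extra $\log n$.

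The paper's barrier $f_n(k)=\tfrac{3}{2}\log\frac{n-k+1}{n+1}$ is asymmetric --- essentially flat until late and then dropping by $\tfrac{3}{2}\log n$ --- and is derived from the bootstrap heuristic you mention (a particle at height $V(u)$ at time $k$ has maximal descendant near $V(u)+m_{n-k}$, so the frontier should sit near $m_n-m_{n-k}=f_n(k)$). With this barrier the first-moment computation in Lemma~\ref{lem:upperfrontier} gives $\sum_k e^{-f_n(k)}(k+1)^{-3/2}\asymp\sum_k\frac{(n+1)^{3/2}}{(k+1)^{3/2}(n-k+1)^{3/2}}<\infty$, and the constant $\tfrac{3}{2}$ is exactly what makes this series converge. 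Your symmetric bump can be made to work too, but only after adding the linear piece so that the curve interpolates from $0$ to $m_n$; as written it does not.
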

As a side result (see Theorem~\ref{thm:genealogy}), we prove the following well-known fact: the individuals with the largest displacements at time $n$ are either close relatives, or their most recent common ancestor is a close relative of the root (see \cite{ABBS13,ABK11} for similar results for the branching Brownian motion).

The rest of the article is organised as follows. In Section~\ref{sec:spinaldecomposition}, we introduce the so-called spinal decomposition of the branching random walk, which links additive moments of the branching random walk with random walk estimates. In Section~\ref{sec:randomwalk}, we give a list of well-known random walk estimates, and extend them to random walks enriched with additional random variables. Section~\ref{sec:tailbehaviour} is devoted to the study of the left tail of $M_n$. This result is used in Section~\ref{sec:conclusion} to prove Theorem~\ref{thm:main}, using a coupling between the branching random walk and a Galton-Watson process.

\section{Spinal decomposition of the branching random walk}
\label{sec:spinaldecomposition}

We introduce in this section the spinal decomposition of the branching random walk. This result consists in an alternative description of a size-biased version of the law of the branching random walk. Spinal decomposition of a branching process has been introduced for the first time to study Galton-Watson processes in \cite{LPP95}. In \cite{Lyo97}, this technique is adapted to the study of branching random walks.

We precise in a first time the branching random walk notation we use. In particular, we introduce the Ulam-Harris notation for (plane) trees, and define the set of marked trees and marked trees with spine. Using this notation, we then state the spinal decomposition in Section~\ref{subsec:spinaldecomposition}, and use it in Section~\ref{subsec:application} to compute the number of individuals satisfying given properties.

\subsection{Branching random walk notation}
\label{subsec:notationbrw}

We denote by $\calU = \bigcup_{n \geq 0} \N^n$, with the convention $\N^0 = \{ \emptyset \}$ the set of finite sequences of positive integers. We write $\calU^* = \calU \backslash \{ \emptyset \}$. For any $u \in \calU$, we denote by $|u|$ the length of $u$. For $k \leq |u|$, we write $u(k)$ for the value of the $k$th element in $u$ and $u_k=(u(1),u(2),\ldots, u(k))$ for the restriction of $u$ to its $k$ first elements. We introduce $\pi : u \in \calU^* \mapsto u_{|u|-1} \in \calU$, and we call $\pi u$ the parent of $u$. Given $u,v \in \calU$, we write $u.v = (u(1),\ldots u(|u|),v(1),\ldots, v(|v|))$ for the concatenation of $u$ and $v$. We define a partial order on $\calU$ by
\[
  u < v \iff |u| < |v| \text{ and } v_{|u|} = u.
\]
We write $u \wedge v = u_{\min(|u|,|v|)} = v_{\min(|u|,|v|)}$ the most recent common ancestor of $u$ and $v$.

The set $\calU$ is used to encode any individual in a genealogical tree. We understand $u \in \calU$ as the $u(|u|)$th child of the $u(|u|-1)$th child of the ... of the $u(1)$th child of the initial ancestor $\emptyset$. A tree $\T$ is a subset of $\calU$ satisfying the three following assumptions:
\begin{equation*}
  \emptyset \in \T \text{,  if } u \in \T\text{, then } \pi u \in \T \text{, and if } u \in \T, \text{ then for any } j \leq u(|u|), \text{ } (\pi u).j \in \T.
\end{equation*}
Given a tree $\T$, the set $\{ u \in \T : |u|=n \}$ is called the $n$th generation of the tree, that we abbreviate into $\{ |u| = n\}$ if the tree we consider is clear in the context. For any $u \in \T$, we write $\Omega(u) = \{ v \in \T : \pi v = u\}$ for the set of children of $u$. We say that $\T$ has infinite height if for any $n \in \N$, $\{|u|=n\}$ is non-empty.

\begin{figure}[ht]
\centering
\subfigure[Ulam-Harris notation]{
\begin{tikzpicture}[scale=0.95]
  \node [individu] {$\emptyset$} [grow'=down, level distance = 1.2cm, sibling distance = 2cm]
  child { node [individu] {$3$} [sibling distance = 1.3cm]
    child { node [individu] {$33$} }
    child { node [individu] {$32$} [sibling distance = 1cm]
      child { node [individu] {$322$} }
      child { node [individu] {$321$} }
    }
    child { node [individu] {$31$} }
  }
  child { node [individu] {$2$} }
  child { node [individu] {$1$} [sibling distance = 1.5cm]
    child { node [individu] {$12$} [sibling distance = 1cm]
      child { node [individu] {$122$} }
      child { node [individu] {$121$} }
    }
    child { node [individu] {$11$} 
      child { node [individu] {$111$} }
      }
  };
  \node[draw,rectangle,rounded corners=3pt] [thick, color=purple] (P) at (3.5,-1.2) {$\pi(31)$};
  \draw[->,>=latex, thick, color=purple] (P) to (2.2,-1.2);
  \draw[thick, color=blue, rounded corners] (-2.2,-3.95) rectangle (-0.3,-3.25) ;
  \draw[thick, color=blue] (-1,-3.9) node[below] {$\Omega(21)$} ;
\end{tikzpicture}}%
\subfigure[Notation for the genealogy]{
\begin{tikzpicture}[scale=0.95]
  \draw[thick,color=blue!50] (0,0) node {$\bullet$} -- (0,-1.2) node {$\bullet$} ;
  \draw[thick,color=blue!50] (0,0) -- (-2,-1.2) node {$\bullet$} ;
  \draw[thick,color=blue!50] (0,0) -- (2,-1.2) node {$\bullet$} ;
  \draw[thick,color=blue!50] (-2,-1.2) -- (-2.75,-2.4) node {$\bullet$} ;
  \draw[thick,color=blue!50] (-2,-1.2) -- (-1.25,-2.4) node {$\bullet$} ;
  \draw[thick,color=blue!50] (-2.75,-2.4) -- (-2.75,-3.6) node {$\bullet$} ;
  \draw[thick,color=blue!50] (-1.25,-2.4) -- (-1.75,-3.6) node {$\bullet$} ;
  \draw[thick,color=blue!50] (-1.25,-2.4) -- (-0.75,-3.6) node {$\bullet$} ;
  \draw[thick,color=blue!50] (2,-1.2) -- (3.3,-2.4) node {$\bullet$} ;
  \draw[thick,color=blue!50] (2,-1.2) -- (2,-2.4) node {$\bullet$} ;
  \draw[thick,color=blue!50] (2,-1.2) -- (0.7,-2.4) node {$\bullet$} ;
  \draw[thick,color=blue!50] (2,-2.4) -- (1.5,-3.6) node {$\bullet$} ;
  \draw[thick,color=blue!50] (2,-2.4) -- (2.5,-3.6) node {$\bullet$} ;
  \draw[thick, color=white] (-1,-3.9) node[below] {$\Omega(21)$} ;
  \draw[thick] (-2.75,-3.6) node[left] {$u$};
  \draw[thick] (-0.75,-3.6) node[right] {$v$};
  \draw[thick] (-2.75,-2.4) node[left] {$u_2$};
  \draw[thick] (-2,-1.2) node[left] {$u_1 = u\wedge v$};
  \draw[thick] (0,0) node[left] {$u_0$} ;
  \draw[thick,color=blue] (0,0) node {$\bullet$} ;
  \draw[thick,color=blue] (0,0) -- (-2,-1.2) node {$\bullet$} ;
  \draw[thick,color=blue] (-2,-1.2) -- (-2.75,-2.4) node {$\bullet$} ;
  \draw[thick,color=blue] (-2,-1.2) -- (-1.25,-2.4) node {$\bullet$} ;
  \draw[thick,color=blue] (-2.75,-2.4) -- (-2.75,-3.6) node {$\bullet$} ;
  \draw[thick,color=blue] (-1.25,-2.4) -- (-0.75,-3.6) node {$\bullet$} ;
\end{tikzpicture}}
\caption{A plane tree}
\end{figure}
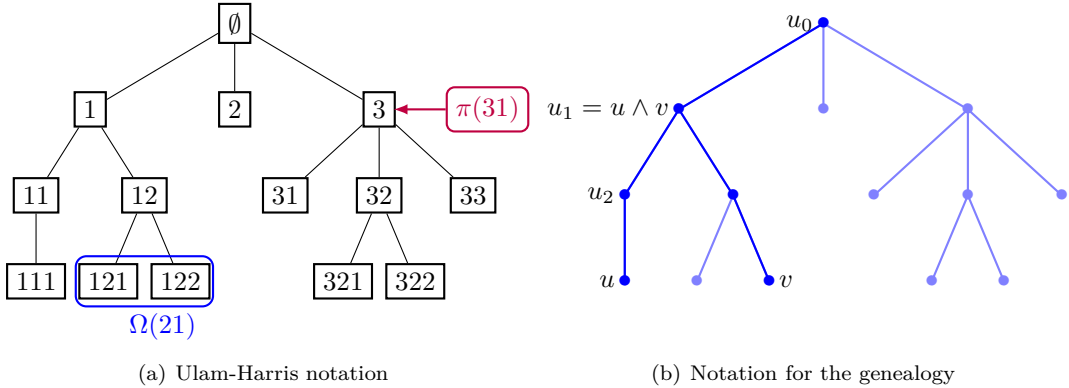

\begin{example}
Given a set $\{ \xi(u), u \in \calU \}$ of i.i.d. non-negative integer-valued random variables, we write $\T = \left\{ u \in \T : \forall k < |u|, u(k+1) \leq \xi(u_{k}) \right\}$, which is a Galton-Watson tree. It is well-known (see e.g. \cite{AtN04}) that $\T$ has infinite height with positive probability if and only if $\E(\xi(\emptyset))>1$.
\end{example}

A marked tree is a couple $(\T,V)$ such that $\T$ is a tree and $V : \T \mapsto \R$.  We refer to $V(u)$ as the position of $u$. The set of all marked trees is written $\mathcal{T}$. We introduce the filtration
\[
  \forall n \geq 0, \calF_n = \sigma\left( (u, V(u)), |u| \leq n \right).
\]
Given a marked tree $(\T,V)$ and $u \in \T$, we set $\T^u = \{ v \in \calU : u.v \in \T\}$. Observe that $\T^u$ is a tree. Moreover, writing $V^u : v \in \T^u \mapsto V(v.u)-V(u)$, we note that $(\T^u,V^u)$ is a marked tree called the subtree rooted at $u$ of $\T$.

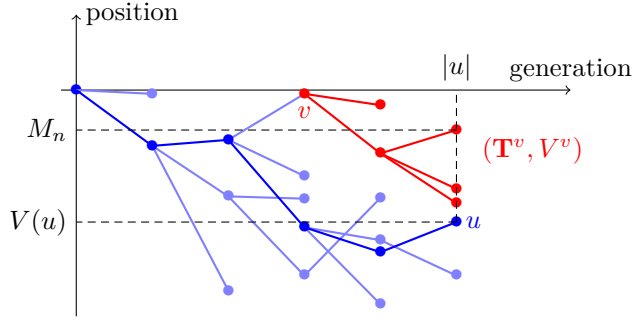
\begin{figure}[ht]
\centering
\begin{tikzpicture}
\draw [->] (-0.2,0) -- (6.5,0) node[above] {generation} ;
\draw [<-] (0,1)  node[right] {position} -- (0,-3) ;

\draw [thick, color=blue] (0,0) node {$\bullet$} ;
\draw[thick, color=blue!50] (0,-0.0) -- (1,-0.05) ;
\draw[thick, color=blue!50] (1,-0.05) node{$\bullet$};
\draw[thick, color=blue!50] (1,-0.74) -- (2,-2.66) ;
\draw[thick, color=blue!50] (2,-2.66) node{$\bullet$};
\draw[thick, color=blue!50] (1,-0.74) -- (2,-1.41) ;
\draw[thick, color=blue!50] (2,-1.41) node{$\bullet$};
\draw[thick, color=blue!50] (2,-0.66) -- (3,-0.05) ;
\draw[thick, color=red] (3,-0.05) node{$\bullet$};
\draw[red] (3,-0.1) node[below] {$v$};
\draw[thick, color=blue!50] (2,-0.66) -- (3,-1.14) ;
\draw[thick, color=blue!50] (3,-1.14) node{$\bullet$};
\draw[thick, color=blue!50] (2,-1.41) -- (3,-2.46) ;
\draw[thick, color=blue!50] (3,-2.46) node{$\bullet$};
\draw[thick, color=blue!50] (2,-1.41) -- (3,-1.44) ;
\draw[thick, color=blue!50] (3,-1.44) node{$\bullet$};
\draw[thick, color=red] (3,-0.05) -- (4,-0.84) ;
\draw[thick, color=red] (4,-0.84) node{$\bullet$};
\draw[thick, color=red] (3,-0.05) -- (4,-0.2) ;
\draw[thick, color=red] (4,-0.2) node{$\bullet$};
\draw[thick, color=blue!50] (3,-1.82) -- (4,-1.99) ;
\draw[thick, color=blue!50] (4,-1.99) node{$\bullet$};
\draw[thick, color=blue!50] (3,-1.82) -- (4,-2.84) ;
\draw[thick, color=blue!50] (4,-2.84) node{$\bullet$};
\draw[thick, color=blue!50] (3,-2.46) -- (4,-1.43) ;
\draw[thick, color=blue!50] (4,-1.43) node{$\bullet$};
\draw[thick, color=red] (4,-0.84) -- (5,-1.31) ;
\draw[thick, color=red] (5,-1.31) node{$\bullet$};
\draw[thick, color=red] (4,-0.84) -- (5,-0.53) ;
\draw[thick, color=red] (5,-0.53) node{$\bullet$};
\draw[thick, color=red] (4,-0.84) -- (5,-1.5) ;
\draw[thick, color=red] (5,-1.5) node{$\bullet$};
\draw[thick, color=blue!50] (4,-1.99) -- (5,-2.46) ;
\draw[thick, color=blue!50] (5,-2.46) node{$\bullet$};
\draw[thick, color=blue] (4,-2.15) -- (5,-1.75) ;
\draw[thick, color=blue] (5,-1.75) node{$\bullet$};
\draw[thick, color=blue] (3,-1.82) -- (4,-2.15) ;
\draw[thick, color=blue] (4,-2.15) node{$\bullet$};
\draw[thick, color=blue] (2,-0.66) -- (3,-1.82) ;
\draw[thick, color=blue] (3,-1.82) node{$\bullet$};
\draw[thick, color=blue] (1,-0.74) -- (2,-0.66) ;
\draw[thick, color=blue] (2,-0.66) node{$\bullet$};
\draw[thick, color=blue] (0,-0.0) -- (1,-0.74) ;
\draw[thick, color=blue] (1,-0.74) node{$\bullet$};
\draw [densely dashed] (5,-1.75) -- (0,-1.75) node[left] {$V(u)$};
\draw [densely dashed] (5,-1.75) -- (5,0) node[above] {$|u|$};
\draw [color=blue] (5,-1.75) node [right] {$u$} ;
\draw [densely dashed] (5,-0.53) -- (0,-0.53) node[left] {$M_n$};
\draw [color=red] (6,-0.8) node {$(\T^v,V^v)$};
\end{tikzpicture}\caption{A plane marked tree}
\end{figure}

A branching random walk is a random marked tree $(\T,V)$ such that the family of point processes $\left\{ (V(v)-V(u), v \in \Omega(u)), u \in \T \right\}$ is independent and identically distributed. Observe that in a branching random walk, for any $u \in \T$, $(\T^u,V^u)$ is a branching random walk independent of $\calF_{|u|}$.

Given a tree $\T$ of infinite height, we say that $w \in \N^\N$ is a spine of $\T$ if for any $n \in \N$, $w_n \in \T$ (where $w_n$ is the finite sequence of the $n$ first elements in $w$). A triplet $(\T,V,w)$, where $\T$ is tree, $V : \T \to \R$ and $w$ is a spine of $\T$ is called a marked tree with spine, and the set of such objects is written $\hat{\calT}$. For any $k \geq 0$, we write $\Omega_k = \Omega(w_k)\backslash\{w_{k+1}\}$, the set of children of the ancestor at generation $k$ of the spine, except its spine child. We define three filtrations on this set
\begin{equation}
  \label{eqn:filtdef}
  \calF_n = \sigma\left( (u, V(u)), |u| \leq n\right)
\text{ and } \calG_n = \sigma\left(w_k, V(w_k) : k \leq n \right) \vee \sigma\left(u,V(u), u \in \Omega_k, k < n\right).
\end{equation}
The filtration $\calF$ is obtained by ``forgetting'' about the spine of the process, while The filtration $\calG$ correspond to the knowledge of the position of the spine and its direct children.

\begin{figure}[ht]
\centering
\subfigure[Red spine of the process]{
\begin{tikzpicture}[yscale=0.4,xscale=0.5]
\draw [->] (-0.2,0) -- (6.5,0) node[above] {$|u|$} ;
\draw [<-] (0,1) node[left] {$V(u)$}  -- (0,-5);
\draw[thick, color=blue] (2,-3.68) -- (3,-2.99) ;
\draw[thick, color=blue] (3,-2.99) node{$\bullet$};
\draw[thick, color=blue] (3,-0.56) -- (4,-0.87) ;
\draw[thick, color=blue] (4,-0.87) node{$\bullet$};
\draw[thick, color=blue] (3,-0.56) -- (4,-1.38) ;
\draw[thick, color=blue] (4,-1.38) node{$\bullet$};
\draw[thick, color=blue] (4,-0.87) -- (5,-1.9) ;
\draw[thick, color=blue] (5,-1.9) node{$\bullet$};
\draw[thick, color=blue] (4,-0.87) -- (5,-2.98) ;
\draw[thick, color=blue] (5,-2.98) node{$\bullet$};
\draw[thick, color=blue] (4,-0.87) -- (5,-1.09) ;
\draw[thick, color=blue] (5,-1.09) node{$\bullet$};
\draw[thick, color=blue] (4,-2.49) -- (5,-4.08) ;
\draw[thick, color=blue] (5,-4.08) node{$\bullet$};
\draw[thick, color=blue] (4,-2.49) -- (5,-3.65) ;
\draw[thick, color=blue] (5,-3.65) node{$\bullet$};
\draw[thick, color=blue] (5,-1.9) -- (6,-3.8) ;
\draw[thick, color=blue] (6,-3.8) node{$\bullet$};
\draw[thick, color=blue] (5,-1.9) -- (6,-2.26) ;
\draw[thick, color=blue] (6,-2.26) node{$\bullet$};
\draw[thick, color=blue] (5,-1.9) -- (6,-4.43) ;
\draw[thick, color=blue] (6,-4.43) node{$\bullet$};
\draw[thick, color=blue] (5,-2.98) -- (6,-2.96) ;
\draw[thick, color=blue] (6,-2.96) node{$\bullet$};
\draw[thick, color=blue] (5,-2.98) -- (6,-2.38) ;
\draw[thick, color=blue] (6,-2.38) node{$\bullet$};
\draw[thick, color=blue] (5,-1.09) -- (6,-2.62) ;
\draw[thick, color=blue] (6,-2.62) node{$\bullet$};
\draw[thick, color=red!50] (1,-0.33) -- (2,-1.94) ;
\draw[thick, color=red!50] (2,-1.94) node{$\bullet$};
\draw[thick, color=red!50] (1,-0.33) -- (2,-3.68) ;
\draw[thick, color=red!50] (2,-3.68) node{$\bullet$};
\draw[thick, color=red!50] (2,-0.21) -- (3,-0.56) ;
\draw[thick, color=red!50] (3,-0.56) node{$\bullet$};
\draw[thick, color=red!50] (3,-1.24) -- (4,-2.49) ;
\draw[thick, color=red!50] (4,-2.49) node{$\bullet$};
\draw[thick, color=red!50] (4,-2.19) -- (5,-3.27) ;
\draw[thick, color=red!50] (5,-3.27) node{$\bullet$};
\draw[thick, color=red!50] (5,-1.39) -- (6,-1.05) node{$\bullet$};
\draw[very thick, color=red] (0,0) node{$\bullet$} -- (1,-0.33) node{$\bullet$} -- (2,-0.21) node{$\bullet$}  -- (3,-1.24)node{$\bullet$} -- (4,-2.19)node{$\bullet$} -- (5,-1.39)node{$\bullet$} -- (6,-1.75)node{$\bullet$} ;

\end{tikzpicture}
}%
\subfigure[Information in $\mathcal{F}$]{
\begin{tikzpicture}[yscale=0.4,xscale=0.5]
\draw [->] (-0.2,0) -- (6.5,0) node[above] {$|u|$} ;
\draw [<-] (0,1) node[left] {$V(u)$}  -- (0,-5);
\draw[thick, color=blue] (0,0) node{$\bullet$} ;
\draw[thick, color=blue] (0,0.0) -- (1,-0.33) ;
\draw[thick, color=blue] (1,-0.33) node{$\bullet$};
\draw[thick, color=blue] (1,-0.33) -- (2,-0.21) ;
\draw[thick, color=blue] (2,-0.21) node{$\bullet$};
\draw[thick, color=blue] (1,-0.33) -- (2,-1.94) ;
\draw[thick, color=blue] (2,-1.94) node{$\bullet$};
\draw[thick, color=blue] (1,-0.33) -- (2,-3.68) ;
\draw[thick, color=blue] (2,-3.68) node{$\bullet$};
\draw[thick, color=blue] (2,-0.21) -- (3,-0.56) ;
\draw[thick, color=blue] (3,-0.56) node{$\bullet$};
\draw[thick, color=blue] (2,-0.21) -- (3,-1.24) ;
\draw[thick, color=blue] (3,-1.24) node{$\bullet$};
\draw[thick, color=blue] (2,-3.68) -- (3,-2.99) ;
\draw[thick, color=blue] (3,-2.99) node{$\bullet$};
\draw[thick, color=blue] (3,-0.56) -- (4,-0.87) ;
\draw[thick, color=blue] (4,-0.87) node{$\bullet$};
\draw[thick, color=blue] (3,-0.56) -- (4,-1.38) ;
\draw[thick, color=blue] (4,-1.38) node{$\bullet$};
\draw[thick, color=blue] (3,-1.24) -- (4,-2.49) ;
\draw[thick, color=blue] (4,-2.49) node{$\bullet$};
\draw[thick, color=blue] (3,-1.24) -- (4,-2.19) ;
\draw[thick, color=blue] (4,-2.19) node{$\bullet$};
\draw[thick, color=blue] (4,-0.87) -- (5,-1.9) ;
\draw[thick, color=blue] (5,-1.9) node{$\bullet$};
\draw[thick, color=blue] (4,-0.87) -- (5,-2.98) ;
\draw[thick, color=blue] (5,-2.98) node{$\bullet$};
\draw[thick, color=blue] (4,-0.87) -- (5,-1.09) ;
\draw[thick, color=blue] (5,-1.09) node{$\bullet$};
\draw[thick, color=blue] (4,-2.19) -- (5,-3.27) ;
\draw[thick, color=blue] (5,-3.27) node{$\bullet$};
\draw[thick, color=blue] (4,-2.49) -- (5,-4.08) ;
\draw[thick, color=blue] (5,-4.08) node{$\bullet$};
\draw[thick, color=blue] (4,-2.49) -- (5,-3.65) ;
\draw[thick, color=blue] (5,-3.65) node{$\bullet$};
\draw[thick, color=blue] (4,-2.19) -- (5,-1.39) ;
\draw[thick, color=blue] (5,-1.39) node{$\bullet$};
\draw[thick, color=blue] (5,-1.9) -- (6,-3.8) ;
\draw[thick, color=blue] (6,-3.8) node{$\bullet$};
\draw[thick, color=blue] (5,-1.9) -- (6,-2.26) ;
\draw[thick, color=blue] (6,-2.26) node{$\bullet$};
\draw[thick, color=blue] (5,-1.9) -- (6,-4.43) ;
\draw[thick, color=blue] (6,-4.43) node{$\bullet$};
\draw[thick, color=blue] (5,-2.98) -- (6,-2.96) ;
\draw[thick, color=blue] (6,-2.96) node{$\bullet$};
\draw[thick, color=blue] (5,-2.98) -- (6,-2.38) ;
\draw[thick, color=blue] (6,-2.38) node{$\bullet$};
\draw[thick, color=blue] (5,-1.09) -- (6,-2.62) ;
\draw[thick, color=blue] (6,-2.62) node{$\bullet$};
\draw[thick, color=blue] (5,-1.39) -- (6,-1.05) node{$\bullet$};
\draw[thick, color=blue] (5,-1.39) -- (6,-1.75) node{$\bullet$};
\end{tikzpicture}
}%
\subfigure[Information in $\mathcal{G}$]{
\begin{tikzpicture}[yscale=0.4,xscale=0.5]
\draw [->] (-0.2,0) -- (6.5,0) node[above] {$|u|$} ;
\draw [<-] (0,1) node[left] {$V(u)$}  -- (0,-5);
\draw[thick, color=red!50] (1,-0.33) -- (2,-1.94) ;
\draw[thick, color=red!50] (2,-1.94) node{$\bullet$};
\draw[thick, color=red!50] (1,-0.33) -- (2,-3.68) ;
\draw[thick, color=red!50] (2,-3.68) node{$\bullet$};
\draw[thick, color=red!50] (2,-0.21) -- (3,-0.56) ;
\draw[thick, color=red!50] (3,-0.56) node{$\bullet$};
\draw[thick, color=red!50] (3,-1.24) -- (4,-2.49) ;
\draw[thick, color=red!50] (4,-2.49) node{$\bullet$};
\draw[thick, color=red!50] (4,-2.19) -- (5,-3.27) ;
\draw[thick, color=red!50] (5,-3.27) node{$\bullet$};
\draw[thick, color=red!50] (5,-1.39) -- (6,-1.05) node{$\bullet$};
\draw[very thick, color=red] (0,0) node{$\bullet$} -- (1,-0.33) node{$\bullet$} -- (2,-0.21) node{$\bullet$}  -- (3,-1.24)node{$\bullet$} -- (4,-2.19)node{$\bullet$} -- (5,-1.39)node{$\bullet$} -- (6,-1.75)node{$\bullet$} ;
\end{tikzpicture}
}
\end{figure}

\subsection{The spinal decomposition}
\label{subsec:spinaldecomposition}

Let $(\T,V)$ be a branching random walk satisfying \eqref{eqn:supercritical} and \eqref{eqn:boundary}. For any $x \in \R$, we write $\P_x$ for the law of $(\T,V+x)$ and $\E_x$ for the corresponding expectation. For any $n \geq 0$, we set $W_n = \sum_{|u|=n} e^{V(u)}$. We observe that $(W_n)$ is a non-negative $(\calF_n)$-martingale. We define the law
\begin{equation}
  \label{eqn:sizeBiaisedLaw}
  \left. \bar{\P}_x\right|_{\calF_n} = e^{-x} W_n \cdot \left.\P_x \right|_{\calF_n}.
\end{equation}
The spinal decomposition consists in an alternative construction of the law $\bar{\P}_x$, as the projection of a law on $\hat{\calT}$, which we now define.

We write $\calL$ (respectively $\hat{\calL}$) for the law of the point process $(V(u), |u|=1)$ under the law $\P$ (resp. $\bar{\P}$). We observe that $\hat{\calL} = \sum_{|u|=1} e^{V(u)} \cdot \calL$. Let $(\hat{L}_n, n \in \N)$ be i.i.d. point processes with law $\hat{\calL}$. Conditionally on this sequence, we choose independently at random, for every $n \in \N$, $w(n) \in \N$ such that writing $\hat{L}_n = (\ell_n(1),\ldots \ell_n(N_n))$ we have
\[
  \forall h \in \N, \text{ } \P\left( w(n) = h \left| (\hat{L}_n, n \in \N) \right. \right) = \ind{h \leq N_n}\frac{e^{\ell_n(h)}}{\sum_{j \leq N_n} e^{\ell_n(j)}}.
\]
We write $w$ for the sequence $(w(n), n \in \N)$.

We introduce a family of independent point processes $\left\{L^u, u \in \calU\right\}$ such that $L^{w_k} = \hat{L}_{k+1}$, and if $u \neq w_{|u|}$, then $L^u$ has law $\calL$. For any $u \in \calU$ such that $|u| \leq n$, we write $L^u = (\ell^u_1,\ldots \ell^u_{N(u)})$. We construct the random tree $\T = \left\{ u \in \calU : |u| \leq n, \forall 1 \leq k \leq |u|, u(k) \leq N(u_{k-1})\right\}$,
and the the function $V : u \in \T \mapsto  \sum_{k=1}^{|u|} \ell^{u_{k-1}}_{u(k)}$. For all $x \in \R$, the law of $(\T, x + V, w) \in \hat{\calT}_n$ is written $\hat{\P}_x$, and the corresponding expectation is $\hat{\E}_x$. This law is called the law of the branching random walk with spine.

We can describe the branching random walk with spine as a process in the following manner. It starts with a unique individual positioned at $x \in \R$ at time 0, which is the ancestral spine $w_0$. Then, at each time $n \in \N$, every individual alive at generation $n$ dies. Each of these individuals gives birth to children, which are positioned around their parent according to an independent point process. If the parent is $w_n$, the law of this point process is $\hat{\calL}$, otherwise the law is $\calL$. The individual $w_{n+1}$ is then chosen at random among the children $u$ of $w_n$, with probability proportional to $e^{V(u)}$. Observe that under the law $\hat{\P}$,
\[
  \big(V(w_{n+1})-V(w_{n}), n \geq 0\big), \, \big((V(u)-V(w_n), u \in \Omega_n), n \geq 0\big), \text{ and } \big\{(\T^u,V^u), u \in \cup_{n \in \N} \Omega_n \big\},
\]
are respectively i.i.d. random variables, i.i.d. point processes and i.i.d. branching random walks. Note that while the branching random walks are independent of $\calG$, $V(w_{n+1})-V(w_{n})$ and $(V(u)-V(w_n), u \in \Omega_n)$ might be correlated.

\begin{figure}[ht]
\centering
\begin{tikzpicture}[xscale=0.7,yscale=0.45]
	\draw [color=red!50] (0,0) -- (8,-2) ;
	\draw [color=red!50] (0,0) -- (6,-2) ;
	\draw [color=red!50] (0,0) -- (-8,-2) ;
	\draw [very thick, color=red] (0,0) node {$\bullet$} ;
	\draw (0,0) node[above] {$w_0$} ;
	\draw [very thick, color=red] (0,0) -- (-3,-2) ;
	\triangleBRW{-8}{1}
	\triangleBRW{6}{1}
	\triangleBRW{8}{1}
	
	\draw [color=red!50] (-3,-2) -- (-6,-4) ;
	\triangleBRW{-6}{2}
	\draw [color=red!50] (-3,-2) -- (-4,-4) ;
	\triangleBRW{-4}{2}
	\draw [very thick, color=red] (-3,-2) node{$\bullet$} ;
	\draw (-3,-2) node[above left] {$w_1$} ;
	\draw [very thick, color=red] (-3,-2) -- (2,-4) ;
	
	\draw [color=red!50](2,-4) -- (2,-6);
	\triangleBRW{2}{3}
	\draw [color=red!50](2,-4) -- (4,-6) ;
	\triangleBRW{4}{3} ;
	\draw [very thick, color=red] (2,-4) node{$\bullet$} ;
	\draw (2,-4) node[above right] {$w_2$} ;
	\draw [very thick, color=red] (2,-4) -- (-1,-6);
	
	\draw (-1,-6) node[above left]{$w_3$} ;
	\draw [densely dashed, very thick, color=red] (-1,-6) -- (-2.5,-8) ;
	\draw [densely dashed,color=red!50] (-1,-6) -- (-1.5,-8) ;
	\draw [densely dashed,color=red!50] (-1,-6) -- (-0.5,-8) ;
	\draw [densely dashed,color=red!50] (-1,-6) -- (0.5,-8) ;
	\draw [very thick, color=red] (-1,-6) node{$\bullet$} ;
\end{tikzpicture}
\caption{The tree of the branching random walk with spine}
\end{figure}

The following result links the laws $\hat{\P}_x$ and $\bar{\P}_x$. The spinal decomposition is proved in \cite{Lyo97}.
\begin{proposition}[Spinal decomposition]
\label{prop:spinaldecomposition}
Assuming \eqref{eqn:supercritical} and \eqref{eqn:boundary}, for any $n \in \N, x \in \R$ and $|u|=n$, we have
\[
  \bar{\P}_x\big|_{\calF_n} = \hat{\P}_x\big|_{\calF_n} \quad \text{ and } \quad \hat{\P}_x(w_n = u |\calF_n) = \frac{e^{V(u)}}{W_n}.
\]
\end{proposition}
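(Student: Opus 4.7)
The plan is to reduce both identities to the single joint statement that, for every $n\in\N$, every bounded $\calF_n$-measurable function $F$, and every $u\in\calU$ with $|u|=n$,
\begin{equation*}
\hat{\E}_x\bigl[F\,\ind{w_n=u}\bigr] = \E_x\bigl[F\cdot e^{V(u)-x}\ind{u\in\T}\bigr], \tag{$\star$}
\end{equation*}
which I denote $(\star)$. Granting $(\star)$, summing over all length-$n$ words $u\in\calU$ collapses the right-hand side to $e^{-x}\E_x[F\,W_n]=\bar{\E}_x[F]$, which is exactly the first identity of the proposition. For the second, applying $(\star)$ with $F=\indset{A}$ for an arbitrary $A\in\calF_n$ and dividing by $\bar{\P}_x(A)=e^{-x}\E_x[\indset{A}W_n]$ produces the regular conditional probability $\hat{\P}_x(w_n=u\mid\calF_n)=e^{V(u)}/W_n$ on $\{u\in\T\}$ (and both sides vanish otherwise, under the convention $e^{V(u)}=0$ when $u\notin\T$).

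To prove $(\star)$, I would unfold the generation-by-generation construction of $\hat{\P}_x$ given just before the proposition and compute the resulting density with respect to $\P_x$ augmented with the spine. Along the spine at step $k$, two operations occur: the point process $L^{w_k}$ is drawn from the size-biased law $\hat{\calL}=\sum_{|v|=1}e^{V(v)}\cdot\calL$, and conditionally on $L^{w_k}$ the next spine vertex $w_{k+1}$ is sampled from $\Omega(w_k)$ with weight proportional to $e^{V(w_{k+1})-V(w_k)}$. Their combined Radon-Nikodym contribution is
\[
\frac{d\hat{\calL}}{d\calL}\bigl(L^{w_k}\bigr)\cdot\frac{e^{V(w_{k+1})-V(w_k)}}{\sum_{v\in\Omega(w_k)}e^{V(v)-V(w_k)}} \;=\; e^{V(w_{k+1})-V(w_k)},
\]
the normalising sum in the denominator of the second factor being exactly the size-biasing density $\frac{d\hat{\calL}}{d\calL}(L^{w_k})$, so the two factors cancel. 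Non-spine individuals at generation $k$ spawn independent subtrees whose law is identical under $\P_x$ and $\hat{\P}_x$, and therefore contribute trivially. Telescoping the spine weights along the ancestral path $\emptyset=w_0<w_1<\cdots<w_n=u$ collapses the product to $e^{V(u)-x}$, which is $(\star)$.

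The cleanest way to turn this heuristic into a rigorous argument is induction on $n$: the base case $n=0$ is immediate, and the step from $n-1$ to $n$ consists in conditioning on $\calG_{n-1}$ and applying the single-step computation above together with the inductive hypothesis. The main obstacle is purely notational — carefully tracking the coupling between $L^{w_{n-1}}$, the spine choice $w(n)$, and the independent non-spine subtrees introduced at generation $n-1$, so that the size-biasing factor really cancels as advertised. Notably, only the first equality in \eqref{eqn:boundary} — which ensures that $\hat{\calL}$ is a probability measure and that $(W_n)$ is a genuine non-negative martingale — is used here; the higher-moment assumptions will only be needed for the random-walk estimates of Section~\ref{sec:randomwalk}.
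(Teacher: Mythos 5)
The paper offers no proof of this statement; it simply attributes it to Lyons and cites \cite{Lyo97}. Your reconstruction follows Lyons' change-of-measure argument and is substantively correct: the master identity $(\star)$ is the right thing to aim for, the cancellation of the size-biasing factor $\frac{d\hat{\calL}}{d\calL}(L^{w_k})$ against the spine-selection weight is exactly what makes the construction consistent with $\bar{\P}_x$, and the telescoping along the ancestral path gives the factor $e^{V(u)-x}$; the induction on $n$ you sketch is the standard way to make this rigorous. One phrasing is loose: "dividing by $\bar{\P}_x(A)$ produces the regular conditional probability" is not quite how one identifies a conditional expectation. What you actually want, and what $(\star)$ together with the first identity immediately gives, is the defining integral identity
\[
\hat{\E}_x\Bigl[\indset{A}\,\frac{e^{V(u)}}{W_n}\Bigr]
= e^{-x}\,\E_x\bigl[\indset{A}\,e^{V(u)}\ind{u\in\T}\bigr]
= \hat{\E}_x\bigl[\indset{A}\,\ind{w_n=u}\bigr]
\qquad\text{for all }A\in\calF_n,
\]
with $e^{V(u)}/W_n$ manifestly $\calF_n$-measurable, which is precisely what characterises $\hat{\P}_x(w_n=u\mid\calF_n)$. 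Your observation that only the first equality in \eqref{eqn:boundary} is needed for this proposition is also accurate.
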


The spinal decomposition enables to compute moments of any additive functional of the branching random walk, and links it to random walk estimates, as we observe in the next section.

\subsection{Application of the spinal decomposition}
\label{subsec:application}

Let $(\T,V)$ be a branching random walk. For any $n \in \N$, we denote by $\mathcal{A}_n$ a subset of $\{|u|=n\}$, such that $\{ u \in \mathcal{A}_n \} \in \mathcal{F}_n$. We write $A_n$ for the number of elements in $\mathcal{A}_n$. We compute in this section the first and second moments of this random variable.

\begin{lemma}
\label{lem:firstmoment}
For any $n \geq 0$ and $x \in \R$, we have
\[\E_x(A_n) = e^x\hat{\E}_x\left(e^{-V(w_n)} \ind{w_n \in \mathcal{A}_n} \right).\]
\end{lemma}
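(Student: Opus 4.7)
The plan is a direct application of the spinal decomposition (Proposition~\ref{prop:spinaldecomposition}). The idea is to rewrite $A_n$ as a weighted sum against the probabilities $e^{V(u)}/W_n$ that appear in the spinal decomposition, then use the change of measure $\bar{\P}_x = e^{-x} W_n \cdot \P_x$ on $\calF_n$, and finally identify $\bar{\P}_x$ with $\hat{\P}_x$ on $\calF_n$.

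First I would write $A_n = \sum_{|u|=n} \ind{u \in \calA_n}$ and, on the event $\{W_n > 0\}$ (which contains $\{A_n > 0\}$ since the existence of any individual at generation $n$ forces $W_n > 0$), factor out $W_n$:
\[
  A_n = W_n \sum_{|u|=n} \frac{e^{V(u)}}{W_n} \, e^{-V(u)} \ind{u \in \calA_n}.
\]
On $\{W_n = 0\}$ both sides of the identity to be proved vanish, so this factorisation is harmless.

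Next, since $\{u \in \calA_n\} \in \calF_n$, the second formula in Proposition~\ref{prop:spinaldecomposition} gives
\[
  \hat{\E}_x\!\left(e^{-V(w_n)} \ind{w_n \in \calA_n} \,\big|\, \calF_n\right)
  = \sum_{|u|=n} \frac{e^{V(u)}}{W_n} \, e^{-V(u)} \ind{u \in \calA_n},
\]
so that $A_n = W_n \cdot \hat{\E}_x\!\left(e^{-V(w_n)} \ind{w_n \in \calA_n} \mid \calF_n\right)$. Taking $\P_x$-expectation and using the definition \eqref{eqn:sizeBiaisedLaw} of $\bar{\P}_x$ on $\calF_n$, we get
\[
  \E_x(A_n) = e^x \, \bar{\E}_x\!\left(\hat{\E}_x\!\left(e^{-V(w_n)} \ind{w_n \in \calA_n} \mid \calF_n\right)\right).
\]
Finally, the first identity in Proposition~\ref{prop:spinaldecomposition} states $\bar{\P}_x|_{\calF_n} = \hat{\P}_x|_{\calF_n}$, so the outer expectation may be replaced by $\hat{\E}_x$, and the tower property yields $\E_x(A_n) = e^x \, \hat{\E}_x(e^{-V(w_n)} \ind{w_n \in \calA_n})$, as claimed.

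There is essentially no serious obstacle here: the statement is the classical many-to-one lemma in disguise, and the only tiny subtlety is the vacuous case $W_n = 0$. The whole proof is three lines once Proposition~\ref{prop:spinaldecomposition} is in hand.
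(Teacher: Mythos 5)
Your proof is correct and follows essentially the same route as the paper's: factor $A_n = W_n \sum_{|u|=n} (e^{V(u)}/W_n)\,e^{-V(u)}\ind{u\in\calA_n}$, apply the change of measure $\eqref{eqn:sizeBiaisedLaw}$, identify $\bar{\P}_x$ with $\hat{\P}_x$ on $\calF_n$, recognise $e^{V(u)}/W_n$ as $\hat{\P}_x(w_n=u\mid\calF_n)$, and finish with the tower property. The paper simply runs the computation in the opposite direction (starting from $\E_x(A_n)$ and pushing forward rather than rewriting $A_n$ first and then taking expectations), and does not bother remarking on the $W_n=0$ case; both presentations are equivalent.
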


\begin{proof}
Applying the spinal decomposition, we observe that
\begin{align*}
  \E_x(A_n) &= e^x\bar{\E}_x\left( \frac{1}{W_n} \sum_{|u|=n} \ind{u \in \mathcal{A}_n} \right)= e^x\hat{\E}_x\left( \sum_{|u|=n} \frac{e^{V(u)}}{W_n} e^{-V(u)} \ind{u \in \mathcal{A}_n} \right)\\
  &= e^x\hat{\E}_x\left( \sum_{|u|=n} \hat{\P}(u=w_n|\calF_n) e^{-V(u)} \ind{u \in \mathcal{A}_n}  \right)= e^x\hat{\E}_x\left( e^{-V(w_n)} \ind{w_n \in \mathcal{A}_n} \right).
\end{align*}
\end{proof}

An immediate consequence of this result is the celebrated many-to-one lemma. This equation, known at least from the early works of Kahane and Peyrière \cite{Pey74,KaP76}, enables to compute the mean of any additive functional of the branching random walk.
\begin{lemma}[Many-to-one lemma]
\label{lem:manytoone}
There exists a random walk $S$ verifying $\P_x(S_0=x)=1$ such that for any measurable positive function $f$, we have
\[
  \E_x\left( \sum_{|u|=n} f(V(u_j), j \leq n) \right) = \E_x\left( e^{x-S_n}f(S_j, j \leq n) \right).
\]
\end{lemma}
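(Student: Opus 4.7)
The plan is to deduce the identity directly from Lemma~\ref{lem:firstmoment}. First I would identify the random walk $S$. Under the law $\hat{\P}_x$, set $S_n := V(w_n)$; by the very construction of the branching random walk with spine described in Section~\ref{subsec:spinaldecomposition}, the successive increments $V(w_{n+1}) - V(w_n)$ form an i.i.d.\ sequence: at each generation, the children of $w_n$ are sampled from an independent copy of $\hat{\calL}$, and $w_{n+1}$ is then picked among them independently of everything else, with probability proportional to $e^{V(\cdot)}$. A short computation exploiting $\hat{\calL} = \sum_{|u|=1} e^{V(u)} \cdot \calL$ shows that for any non-negative measurable $g$,
\[
\hat{\E}_0\big[g(V(w_1))\big] = \E\left[\sum_{|u|=1} e^{V(u)} g(V(u))\right],
\]
which is the expectation of $g$ under a genuine probability law by \eqref{eqn:boundary}, with mean zero and variance $\sigma^2$. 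Transporting this law to the original space defines the desired random walk $S$ with $S_0 = x$ under $\P_x$.

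Once $S$ is identified, I would fix an arbitrary Borel set $B \subseteq \R^{n+1}$ and take $\calA_n := \{u : |u|=n,\ (V(u_j))_{j \leq n} \in B\}$. Since the event $\{u \in \calA_n\}$ is $\calF_n$-measurable, Lemma~\ref{lem:firstmoment} applies, and gives
\[
\E_x\left[\sum_{|u|=n} \ind{(V(u_j))_{j \leq n} \in B}\right] = e^x\, \hat{\E}_x\!\left[e^{-V(w_n)}\, \ind{(V(w_j))_{j \leq n} \in B}\right] = \E_x\!\left[e^{x-S_n}\, \ind{(S_j)_{j \leq n} \in B}\right],
\]
where the second equality uses that $(V(w_j))_{j \leq n}$ under $\hat{\P}_x$ has the same joint distribution as $(S_j)_{j \leq n}$ under $\P_x$. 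This is exactly the claimed identity for $f = \ind{\cdot \in B}$.

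Finally, I would extend the identity from indicators to general non-negative measurable $f$ by the usual monotone class/monotone convergence argument: by linearity the identity holds for non-negative simple functions of $(V(u_j))_{j \leq n}$, and monotone convergence then gives it for every non-negative measurable $f$. There is no real obstacle: the only substantive step is checking that the spine increments are i.i.d.\ with the prescribed one-step law, which is immediate from the construction of $\hat{\P}$ and was essentially already built into the statement of Proposition~\ref{prop:spinaldecomposition}.
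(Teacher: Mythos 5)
Your proof is correct and takes essentially the same route as the paper: apply Lemma~\ref{lem:firstmoment} to the indicator of a cylinder event, identify $(V(w_j))_{j\geq 0}$ under $\hat\P_x$ with a random walk $S$, and pass from indicators to general non-negative $f$ by monotone convergence. You additionally spell out the one-step law of $S$ (namely $\hat\E_0[g(V(w_1))]=\E[\sum_{|u|=1}e^{V(u)}g(V(u))]$) and why the increments are i.i.d., details which the paper leaves implicit but which are standard consequences of the spinal construction.
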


\begin{proof}
Let $B$ be a measurable subset of $\R^n$, we set $\mathcal{A}_n = \left\{ |u|=n : (V(u_j), j \leq n) \in B \right\}$. Applying Lemma~\ref{lem:firstmoment} yields
\[  \E_x\left( \sum_{|u|=n} \ind{(V(u_j), j \leq n) \in B} \right) = \hat{\E}_x\left( e^{x-V(w_n)} \ind{(V(w_j), j\leq n) \in B} \right)
  =\E_x\left( e^{x-S_n} \ind{(S_j, j \leq n) \in B} \right),\]
where $S$ is a random walk under law $\P_x$ with the same law as $(V(w_j), j \geq 0)$ under law $\hat{\P}_x$. This equality being true for any measurable set $B$, it is true for any measurable positive functions, taking increasing limits.
\end{proof}

Similarly, we can compute the second moment of $A_n$.
\begin{lemma}
\label{lem:secondmoment}
For any $0 \leq k < n$, we write $A^{(2)}_{n,k} = \sum_{|u|=n, |v|=n} \ind{|u\wedge v|=k} \ind{u \in \mathcal{A}_n} \ind{v \in \mathcal{A}_n}$. We have $A_n^2 = A_n +\sum_{k=0}^{n-1} A^{(2)}_{n,k}$. Moreover, for any $0 \leq k < n$,
\[
  \E(A^{(2)}_{n,k}) = \hat{\E}\left( e^{-V(w_n)} \ind{w_n \in \mathcal{A}_n} B_{k,n} \right),
\]
where $B_{k,n} = \sum_{u \in \Omega_k} \hat{\E}\left( \left. \sum_{|v|=n,v \geq u} \ind{v \in \mathcal{A}_n} \right| \calG_n \right)$.
\end{lemma}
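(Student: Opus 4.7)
The first assertion is purely combinatorial and would be handled at the outset. Expanding
\[
A_n^2 = \sum_{|u|=n}\sum_{|v|=n} \ind{u \in \mathcal{A}_n}\,\ind{v \in \mathcal{A}_n},
\]
the diagonal contribution $u=v$ produces exactly $A_n$, and for an off-diagonal pair with $|u|=|v|=n$ one has $|u \wedge v| \in \{0,1,\dots,n-1\}$. Partitioning the off-diagonal terms according to this value of $|u \wedge v|$ yields $A_n^2 = A_n + \sum_{k=0}^{n-1} A^{(2)}_{n,k}$.

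For the second moment formula, my plan is to apply the spinal change of measure exactly as in the proof of Lemma~\ref{lem:firstmoment}, but with a weighted summand. For each $u$ of length $n$ the quantity
\[
f(u) := \ind{u \in \mathcal{A}_n} \sum_{|v|=n,\, |u\wedge v|=k} \ind{v \in \mathcal{A}_n}
\]
is non-negative and $\calF_n$-measurable, so repeating the derivation of Lemma~\ref{lem:firstmoment} (i.e.\ writing $\E[\sum_{|u|=n} f(u)] = \bar{\E}[\tfrac{1}{W_n}\sum_{|u|=n} f(u)]$ via \eqref{eqn:sizeBiaisedLaw} and using $\hat{\P}(u=w_n\,|\,\calF_n) = e^{V(u)}/W_n$) gives
\[
\E(A^{(2)}_{n,k}) = \hat{\E}\!\left( e^{-V(w_n)}\,\ind{w_n \in \mathcal{A}_n}\!\!\sum_{|v|=n,\, |w_n \wedge v| = k}\!\!\ind{v \in \mathcal{A}_n} \right).
\]

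Next I would rewrite the inner sum in terms of the spine decomposition. The condition $|w_n \wedge v|=k$ means that $v$ and the spine share $w_k$ as their most recent common ancestor, which is equivalent to $v$ being a descendant of some non-spine child $u \in \Omega_k$ of $w_k$. Hence
\[
\sum_{|v|=n,\, |w_n \wedge v|=k}\ind{v \in \mathcal{A}_n} = \sum_{u \in \Omega_k}\sum_{|v|=n,\,v \geq u}\ind{v \in \mathcal{A}_n}.
\]
Finally, since the prefactor $e^{-V(w_n)}\ind{w_n \in \mathcal{A}_n}$ is measurable with respect to the spine filtration $\calG_n$ defined in \eqref{eqn:filtdef}, the tower property allows me to push the conditional expectation inside and obtain exactly $\hat{\E}(e^{-V(w_n)}\ind{w_n \in \mathcal{A}_n}\,B_{k,n})$.

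The routine obstacle is just keeping the measurability bookkeeping straight: I need to verify that $\ind{w_n \in \mathcal{A}_n}$ can be regarded as $\calG_n$-measurable in the intended applications (where $\mathcal{A}_n$ is determined by ancestral quantities such as $(V(u_j))_{j \leq n}$), so that the tower step is legitimate. Once that is settled, the computation is a direct two-step application of the spinal decomposition followed by a conditioning, parallel in structure to Lemma~\ref{lem:firstmoment}.
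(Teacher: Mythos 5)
Your proposal is correct and follows essentially the same two-step route as the paper: first the spinal change of measure as in Lemma~\ref{lem:firstmoment} applied to the weighted summand $f(u)$, then rewriting the inner sum over $v$ with $|w_n\wedge v|=k$ as a sum over descendants of $u \in \Omega_k$ and conditioning on $\calG_n$. You are also right to flag the measurability point, which the paper's proof glosses over: the final tower step requires $e^{-V(w_n)}\ind{w_n\in\mathcal{A}_n}$ to be $\calG_n$-measurable, and while this is not automatic from the blanket hypothesis $\{u\in\mathcal{A}_n\}\in\calF_n$, it does hold in every application in Section~\ref{sec:tailbehaviour}, where $\mathcal{A}_n$ is always defined through ancestral data $(V(u_j))_{j\leq n}$ and $(\xi(u_j))_{j<n}$ of the spine.
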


\begin{proof}
The first equality is immediate: $A_n^2$ is the number of couples of individuals in $\mathcal{A}_n$, and we may partition this set according to the generation at which the most recent common ancestor was alive. We now use the spinal decomposition in the same way as in Lemma~\ref{lem:firstmoment}, we have
\begin{align*}
  \E(A^{(2)}_{n,k}) &= \hat{\E} \left( \sum_{|u|=n} \frac{e^{V(u)}}{W_n} e^{-V(u)} \ind{u \in \mathcal{A}_n} \sum_{|v|=n} \ind{|u \wedge v|=k, v \in \mathcal{A}_n} \right)\\
  &= \hat{\E} \left( e^{-V(w_n)} \ind{w_n \in \mathcal{A}_n} H_{k,n} \right),
\end{align*}
where $H_{k,n} = \sum_{|v|=n} \ind{|v \wedge w_n|=k} \ind{v \in \mathcal{A}_n} = \sum_{u \in \Omega_k} \sum_{|v|=n, v \geq u} \ind{v \in \mathcal{A}_n}$. Thus, conditioning on $\mathcal{G}_n$, we have
\[
  \E(A_{n,k}^{(2)}) = \hat{\E} \left( e^{-V(w_n)} \ind{w_n \in \mathcal{A}_n} B_{k,n} \right).
\]
\end{proof}

\section{Some random walk estimates}
\label{sec:randomwalk}

We collect in this section a series of random walk estimates, and extend these results to random walk enriched with additional random variables, correlated to the last step of the walk. More precisely, let $(X_j,\xi_j)$ be i.i.d. random variables on $\R^2$ such that
\begin{equation}
  \label{eqn:integrabilityrw}
  \E(X_1) = 0, \quad \sigma^2 := \E(X_1^2) \in (0,+\infty) \quad \text{and} \quad \E((\xi_1)_+^2)< +\infty.
\end{equation}
We denote by $T_n = T_0 + \sum_{j=1}^n X_j$, where for any $x \in \R$, $\P_x(T_0=x) = 1$. We write $\P=\P_0$ for short. The process of interest $(T_{n-1}, \xi_n), n \geq 1)$ is an useful toy-model to study the spine of the branching random walk. It has similar structure as  $\Big( \big( V(w_{n}), (V(u)-V(w_n), u \in \Omega_n) \big), n \geq 0 \Big)$.

We first recall bounds on the value taken by $T_n$ at time $n$. Stone's local limit theorem \cite{Sto65} bounds the probability for a random walk to end up in an interval of finite size. For any $a,h>0$,
\begin{equation}
  \label{eqn:locallimitub}
   \limsup_{n \to +\infty} n^{1/2}\sup_{|y| \geq a n^{1/2}} \P(T_n \in [y,y+h]) \leq C (1+h)e^{-\frac{a^2}{2\sigma^2}}.
\end{equation}
Caravenna and Chaumont obtained in \cite{CaC08} a similar result for a random walk conditioned to stay non-negative. Given $(r_n) = O(n^{1/2})$, there exists $H>0$ such that for any $0 < a < b$,
\begin{equation}
  \label{eqn:locallimitlb}
  \liminf_{n \to +\infty} n^{1/2}\inf_{y \in [0,r_n]} \inf_{x \in [an^{1/2},bn^{1/2}]} \P(T_n \in [x,x+H]|T_j \geq -y, j \leq n) > 0.
\end{equation}
In the rest of the paper, $H$ is a fixed constant, large enough such that \eqref{eqn:locallimitlb} holds for the random walk followed by the spine of the branching random walk.

We now study the probability for a random walk to stay above a given boundary $(f_n)$, that is $O(n^{1/2-\epsilon})$. This result is often called in the literature the ballot theorem (see e.g. \cite{ABR08} for a review on this type of results). The upper bound we use here can be found in \cite[Lemma 3.6]{Mal14a},
\begin{equation}
  \label{eqn:ballotub}
  \sup_{n \in \N, y \geq 0} \frac{n^{1/2}}{1+y} \P\left( T_j \geq f_j - y, j \leq n \right) < +\infty.
\end{equation}
The lower bound is a result of Kozlov \cite{Koz76},
\begin{equation}
  \label{eqn:ballotlb}
  \inf_{n \in \N, y \in [0,n^{1/2}]} \frac{n^{1/2}}{1+y} \P\left( T_j \geq - y, j \leq n \right) < +\infty.
\end{equation}

Using these results, we are able to compute the probability for a random walk to make an excursion above a given curve. The proofs of the following results are very similar to the proofs of \cite[Lemmas 4.1 and 4.3]{AiS10}, and are postponed to the Appendix A.1.
\begin{lemma}
\label{lem:excursionmarchealeatoireub}
Let $A > 0$ and $\alpha < 1/2$, for any $n \geq 1$ we consider a function $k \mapsto f_n(k)$ such that
\[
  f_n(0)=0 \quad \text{and} \quad \sup_{j \leq n} \max\left( \frac{|f_n(j)|}{j^{\alpha}}, \frac{|f_n(n)-f_n(j)|}{(n-j)^\alpha} \right) \leq A.
\]
There exists $C>0$ such that for all $y,z,h > 0$,
\begin{multline*}
  \P\left( T_n - f_n(n) + y \in [z-h,z], T_{j} \geq f_n(j) - y, j \leq n \right) \\
  \leq C \frac{(1+y \wedge n^{1/2})(1+h \wedge n^{1/2})(1+z \wedge n^{1/2})}{n^{3/2}}.
\end{multline*}
\end{lemma}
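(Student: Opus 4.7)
The argument follows the three-block decomposition of Aïdékon and Shi~\cite{AiS10}. Set $m := \lfloor n/3 \rfloor$ and apply the Markov property at times $m$ and $n-m$. Conditionally on $T_m$ and $T_{n-m}$, the event in question splits into three independent random-walk events on the intervals $[0,m]$, $[m, n-m]$ and $[n-m, n]$, which I bound separately before integrating over the conditioning variables.

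On $[0, m]$, the restricted boundary satisfies $|f_n(j)| \leq A j^\alpha$ with $\alpha < 1/2$, so \eqref{eqn:ballotub} applies. Combined with itself run backwards on $[m/2, m]$, with the two halves joined through Stone's estimate \eqref{eqn:locallimitub} at their common endpoint, it yields the refined ballot-with-endpoint estimate
\begin{equation*}
\P\!\left(T_j \geq f_n(j) - y,\; j \leq m,\; T_m - f_n(m) + y \in [a, a+1]\right) \leq C\,\frac{(1 + y \wedge m^{1/2})(1 + a \wedge m^{1/2})}{m^{3/2}}, \quad a \geq 0.
\end{equation*}
A symmetric estimate applies to the third interval $[n-m, n]$ after time reversal, since $\tilde{f}(k) := f_n(n) - f_n(n-k)$ inherits from $f_n$ the same sub-diffusive growth $|\tilde f(k)| \leq A k^\alpha$; this produces an analogous bound with an additional factor $(1 + h \wedge m^{1/2})$ coming from the window of size $h$ imposed on $T_n - f_n(n) + y$.

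For the middle interval I discard the ballot constraint (which only improves the upper bound) and apply~\eqref{eqn:locallimitub}, giving $\P(T_{n-m} - T_m \in [b-a, b-a+1]) \leq C/m^{1/2}$ with a Gaussian tail for $|b-a| \gg m^{1/2}$. This tail effectively restricts the summation over admissible values of $(T_m, T_{n-m})$ to slacks $\alpha_1 := (a - f_n(m) + y)_+$ and $\alpha_3 := (b - f_n(n-m) + y)_+$ of order at most $m^{1/2}$. Multiplying the three blockwise bounds and integrating the resulting product of $(1+\alpha_1)(1+\alpha_3)$ over $[0,m^{1/2}]^2$ contributes a factor $O(m^2)$, which combines with the $m^{-7/2}$ from the three building blocks to give the desired $m^{-3/2} \asymp n^{-3/2}$ decay, with coefficient $(1 + y \wedge n^{1/2})(1 + h \wedge n^{1/2})(1 + z \wedge n^{1/2})$.

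The main obstacle is the refined two-factor ballot estimate, which is strictly stronger than~\eqref{eqn:ballotub}. Its proof is a fairly standard application of the ballot theorem together with time reversal, but it requires the bent boundary to grow strictly slower than $\sqrt{n}$; the hypothesis $\alpha < 1/2$ is exactly what makes the constants in the bent-boundary ballot estimate uniform in $f_n$ (they depend only on $A$ and $\alpha$). The $(\cdot\wedge n^{1/2})$ truncations in the conclusion are then cosmetic: they absorb the regime where one of $y, z, h$ exceeds $n^{1/2}$, in which case the probability is crudely bounded by $1$.
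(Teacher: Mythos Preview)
Your three-block decomposition and time-reversal strategy are correct and match the paper's approach in spirit, but you are working harder than necessary. The paper does not prove or use the refined two-factor ballot-with-endpoint estimate you propose; instead, after applying the Markov property at time $p=\lfloor n/3\rfloor$, it simply takes the \emph{supremum} over the intermediate position $T_p$ rather than integrating over it. This decouples the first block from the rest without any summation: block~1 is bounded directly by the one-factor ballot estimate~\eqref{eqn:ballotub}, giving $(1+y\wedge n^{1/2})/n^{1/2}$. The remaining $(n-p)$-step walk is then time-reversed, and the same trick is repeated: Markov at time $p$ plus a supremum yields the one-factor ballot bound $(1+z\wedge n^{1/2})/n^{1/2}$ for block~3, and the middle block is handled by Stone's local limit theorem alone, contributing $(1+h\wedge n^{1/2})/n^{1/2}$. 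Multiplying the three factors gives the lemma immediately.

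Your route is valid but costs you an auxiliary lemma (the two-factor ballot estimate) whose proof is essentially a miniature of the main argument, and then an integration step that requires some care --- as you note, the Gaussian tail from the middle block is what keeps the double sum over slacks $(\alpha_1,\alpha_3)$ finite, and one must also cap the refined ballot factors at $m^{1/2}$ and invoke Stone's tail for slacks $\gg m^{1/2}$. None of this is needed if one takes suprema instead. The payoff of your approach would appear only if one wanted matching lower bounds or sharper asymptotics, where tracking the endpoint distribution genuinely matters; for a pure upper bound, the paper's supremum trick is the cleaner move.
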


Similarly, we obtain a lower bound for this quantity.
\begin{lemma}
\label{lem:excursionmarchealeatoirelb}
Let $\lambda > 0$, for any $0 \leq k \leq n$ we write $f_n(k) = \lambda \log \frac{n-k+1}{n+1}$. There exists $c>0$ such that for any $y \in [0,n^{1/2}]$,
\[
  \P\left( T_n \leq f_n(n)-y+H, T_j \leq f_n(j)-y, j \leq n \right) \geq c \frac{1+y}{n^{3/2}}.
\]
\end{lemma}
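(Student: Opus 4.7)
I read the displayed inequalities as the natural lower-bound companion to Lemma~\ref{lem:excursionmarchealeatoireub}, i.e., as
\[
\P\bigl(T_j \geq f_n(j)-y\text{ for all }j \leq n,\ T_n\in [f_n(n)-y,\,f_n(n)-y+H]\bigr)\geq \frac{c(1+y)}{n^{3/2}}.
\]
(The inequality as literally written is vacuous at $j=0$ for $y>0$; the above is the formulation needed in the sequel.) The approach follows the scheme of \cite[Lemma 4.3]{AiS10} and crucially exploits the monotonicity of $f_n$.

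Since $f_n$ is decreasing, $f_n(j) \geq f_n(n-1) = -\lambda\log\frac{n+1}{2}$ for $j \leq n-1$. Consider the sub-event
\[
E:=\bigl\{T_j\geq f_n(n-1)-y,\ j\leq n-1;\ T_{n-1}\in [f_n(n-1)-y,\,f_n(n-1)-y+H]\bigr\}.
\]
On $E$ we automatically have $T_j\geq f_n(j)-y$ for all $j\leq n-1$: on the bulk of $[0,n]$ we have replaced the bent barrier by the constant one at its endpoint value, which is a stronger restriction. Writing $T'_j := T_j - f_n(n-1) + y$, the event $E$ becomes $\{T'_j\geq 0,\ j\leq n-1,\ T'_{n-1}\in [0,H]\}$ with $T'_0 = y+\lambda\log\frac{n+1}{2}\in[0,\sqrt{n-1}]$ for $y\in[0,\sqrt n]$ and $n$ large. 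The classical ballot-endpoint lower bound for centered finite-variance walks (a standard consequence of the Caravenna--Chaumont local limit theorem \cite{CaC08}, strengthening \eqref{eqn:locallimitlb} by allowing the endpoint to lie in an $H$-window at the barrier rather than at scale $\sqrt n$) then yields
\[
\P(E)\geq \frac{c(1+T'_0)(1+H)}{(n-1)^{3/2}}\geq \frac{c'(1+y)}{n^{3/2}}.
\]

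To handle the single remaining step $j=n$, I would condition on $T_{n-1}$ in the window of $E$ and require $X_n := T_n-T_{n-1}$ to land in the $H$-window around $f_n(n)-f_n(n-1) = -\lambda\log 2$ that ensures $T_n \in [f_n(n)-y,\,f_n(n)-y+H]$. Because $X_n$ has finite, positive variance, this one-step probability is bounded below by a positive constant (possibly after enlarging $H$ so that the window covers an interval of positive mass of the law of $X_1$ around $-\lambda\log 2$). Multiplying the two contributions gives the claimed $c(1+y)/n^{3/2}$ bound.

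The main obstacle is the ballot-endpoint lower bound invoked above: the excerpt supplies \eqref{eqn:ballotlb} (ballot under a constant barrier) and \eqref{eqn:locallimitlb} (conditional local limit at scale $\sqrt n$), but the required refinement ``end in an $H$-window at scale $O(1)$'' giving a conditional density $\asymp (1+H)/\sqrt{N}$ at the barrier rather than at scale $\sqrt N$ must be imported from \cite{CaC08}. Alternatively, one can derive it directly by KMT coupling to Brownian motion and using that $T_n/\sqrt n$ conditioned on $\{T_j\geq 0\}$ converges to a Rayleigh law, whose density at $0$ vanishes linearly and yields the required $H^2/n$ probability of landing in $[0,H]$.
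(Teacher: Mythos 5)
You correctly identified the typo: the intended statement, consistent with both the appendix proof and the use in Lemma~\ref{lem:meanlowerbound}, is
$$\P_y\bigl(T_n \leq f_n(n)+H,\ T_j\geq f_n(j),\ j\leq n\bigr)\geq c\,\frac{1+y}{n^{3/2}},$$
a walk started at $y\geq 0$ staying \emph{above} the decreasing curve $f_n$ and ending within $H$ of its endpoint.

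However, your key inclusion is reversed. Since $f_n$ is \emph{decreasing}, $f_n(j)>f_n(n-1)$ for $j<n-1$, so the bent barrier $f_n(j)-y$ lies \emph{above} the flat level $f_n(n-1)-y$ for all but the last step. Requiring $T_j\geq f_n(n-1)-y$ is therefore \emph{weaker} than requiring $T_j\geq f_n(j)-y$; a trajectory can lie in $E$ while crossing the true barrier somewhere in $[1,n-2]$, so $E$ does not force the target event, and a lower bound for $\P(E)$ bounds nothing useful. A sanity check exposes the sign error: with $T'_0=y+\lambda\log\frac{n+1}{2}$ your argument would give a lower bound of order $(1+y+\lambda\log n)/n^{3/2}$, which for fixed $y$ and large $n$ exceeds the matching upper bound of Lemma~\ref{lem:excursionmarchealeatoireub} --- impossible for the genuine event. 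The version of the flattening trick that does produce a sub-event replaces $f_n$ by the constant $f_n(0)=0$, but then the walk must drop by roughly $\lambda\log n$ in the final step to reach the $H$-window around $f_n(n)$, which has vanishing probability for a finite-variance step law: the barrier drops by a diverging amount over $[0,n]$, and no bounded number of last-step corrections can absorb it. The correct route, used in the paper's Appendix and in \cite[Lemma 4.3]{AiS10}, is the three-part decomposition at $p=\floor{n/3}$: on $[0,p]$ the barrier is $O(1)$, and \eqref{eqn:ballotlb} with \eqref{eqn:locallimitlb} take the walk from $y$ up to the window $[p^{1/2},2p^{1/2}]$ at a cost of order $(1+y)/\sqrt n$; on $[2p,n]$, time reversal $\hat T_j:=T_n-T_{n-j}$ turns the constrained descent to $f_n(n)$ into a second ballot-type estimate worth $1/\sqrt n$; and \eqref{eqn:locallimitlb} glues the two halves across the middle third for a further $1/\sqrt n$, giving $(1+y)/n^{3/2}$ in total.
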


Finally, we are able to bound the probability for the random walk to make an excursion, while the additional random variables remain small. This proof, very similar to \cite[Lemma B.2]{Aid13} is also postponed to Appendix A.2.
\begin{lemma}
\label{lem:excursionSpine}
With the same notation as in the previous lemma, we set
\[
  \tau = \inf\{ k \leq n : T_k \leq f_n(k)-y + \xi_{k+1} \}.
\]
There exists $C>0$ such that for all $y\geq 0$,
\begin{multline*}
  \qquad\qquad   \P\left( T_n \leq f_n(n)-y+H, \tau < n, T_j \geq f_n(j)-y, j \leq n \right) \\ 
  \leq C\frac{1+y}{n^{3/2}}\left(\P(\xi_{1} \geq 0) + \E\left((\xi_1)_+^2\right) \right)\qquad \qquad 
\end{multline*}
\end{lemma}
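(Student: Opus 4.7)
Our plan is to decompose $\{\tau<n\}$ according to the value of $\tau$. Writing $S_j := T_j - f_n(j) + y$ and $\mathcal{E}_n := \{S_j \geq 0,\, j \leq n,\, S_n \leq H\}$, the inclusion $\{\tau=k\} \subset \{\xi_{k+1} \geq S_k\}$ combined with a union bound gives
\[
  \P\bigl(\{\tau<n\} \cap \mathcal{E}_n\bigr) \;\leq\; \sum_{k=0}^{n-1} \P\bigl(\xi_{k+1} \geq S_k,\, \mathcal{E}_n\bigr).
\]

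For each fixed $k$, I would apply the Markov property at time $k+1$. The tail of the excursion after time $k+1$, controlled by Lemma~\ref{lem:excursionmarchealeatoireub}, contributes a factor at most $C(1 + S_{k+1} \wedge \sqrt{n-k})/(n-k)^{3/2}$. On $\{\xi_{k+1} \geq S_k\}$, using $S_{k+1} = S_k + X_{k+1} + O(1)$ together with the subadditivity of $x \mapsto x \wedge \sqrt{n-k}$ on $\mathbb{R}_+$, this bound is further estimated by $C\bigl(1 + \xi_{k+1} \wedge \sqrt{n-k} + (X_{k+1})_+ \wedge \sqrt{n-k}\bigr)$. Since $(X_{k+1}, \xi_{k+1})$ is independent of $\sigma(X_j,\, j \leq k)$, taking expectation and applying the ballot-type estimate from Lemma~\ref{lem:excursionmarchealeatoireub} to the past segment (which yields $\P(S_j \geq 0,\, j \leq k,\ S_k \in [m, m+1)) \leq C(1+y)(1 + m \wedge \sqrt{k})/k^{3/2}$) would lead to a bound of the form
\[
  \P\bigl(\xi_{k+1} \geq S_k,\, \mathcal{E}_n\bigr) \;\leq\; \frac{C(1+y)\,\Sigma_{k,n}}{k^{3/2}(n-k)^{3/2}},
\]
where $\Sigma_{k,n}$ collects the $(1 + m \wedge \sqrt{k})(1 + m \wedge \sqrt{n-k})$-weighted tails of $\xi_1$ together with a cross-term in $(X_1)_+$. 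Summing over $k$ relies on $\sum_{k=1}^{n-1}[k^{3/2}(n-k)^{3/2}]^{-1} \leq Cn^{-3/2}$, while the boundary contributions $k \in \{0, n-1\}$ are treated separately by a direct computation against the joint law of $(X_1, \xi_1)$.

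The main obstacle is to show that $\Sigma_{k,n} \leq C\bigl(\P(\xi_1 \geq 0) + \E[(\xi_1)_+^2]\bigr)$ uniformly in $k, n$. Without the $\sqrt{n-k}$ and $\sqrt{k}$ caps, one would end up with terms like $\sum_m (1+m)^2 \P(\xi_1 \geq m)$ and hence require $\E[(\xi_1)_+^3]$, which is not in our hypotheses. I exploit the caps by splitting the sum at $m \sim \sqrt{k \wedge (n-k)}$: the small-$m$ range is controlled by $\sum_m (1+m)\P(\xi_1 \geq m) \leq C(\P(\xi_1 \geq 0) + \E[(\xi_1)_+^2])$, and the large-$m$ tail by Markov's inequality $\P(\xi_1 \geq m) \leq \E[(\xi_1)_+^2]/m^2$. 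The cross-term involving $(X_1)_+$ is handled via Fubini and a Cauchy--Schwarz bound against $\E(X_1^2) < +\infty$, the constant $\sigma^2$ being absorbed into $C$. The argument follows closely the lines of \cite[Lemma B.2]{Aid13}, with refinements needed to accommodate the bent boundary $f_n$.
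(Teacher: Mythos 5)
Your plan breaks down at precisely the step you identified as "the main obstacle": the uniform bound $\Sigma_{k,n} \leq C\bigl(\P(\xi_1 \geq 0) + \E[(\xi_1)_+^2]\bigr)$ is \emph{false}. Write $q_m = \P(\xi_1 \geq m)$ and take $k$ and $n-k$ both larger than some $R^2$. Then for all $m \leq R$ the two caps are inactive, so $\Sigma_{k,n} \geq \sum_{m \leq R}(1+m)^2 q_m$; letting $R \to \infty$ (possible since $k, n-k$ range freely) this tends to $\sum_m (1+m)^2 q_m \asymp \E[(\xi_1)_+^3]$, a \emph{third} moment which can be infinite while $\E[(\xi_1)_+^2] < \infty$. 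Your splitting at $m \sim \sqrt{k \wedge (n-k)}$ does not rescue this: in the small-$m$ range the weight genuinely is $(1+m)^2$, not $(1+m)$, so the claimed bound $\sum_m (1+m)\P(\xi_1 \geq m)$ there is incorrect. Even if you abandon uniformity and push the $k$-sum through first, you still lose a logarithm. Indeed, for $m \leq \sqrt{n/2}$,
\[
\sum_{k=1}^{n-1}\frac{(1 + m\wedge\sqrt{k})(1 + m\wedge\sqrt{n-k})}{k^{3/2}(n-k)^{3/2}}
\;\geq\; c\,\frac{1+m}{n^{3/2}}\sum_{k\leq m^2}\frac{1+\sqrt{k}}{k^{3/2}}
\;\asymp\; \frac{(1+m)\log(2+m)}{n^{3/2}},
\]
because the past weight $(1+m\wedge\sqrt{k})/k^{3/2}$ behaves like $k^{-1}$ in the range $k \leq m^2$. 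Multiplying by $q_m$ and summing over $m$ produces $\E[(\xi_1)_+^2 \log_+ (\xi_1)_+]/n^{3/2}$, a strictly stronger moment condition than the lemma assumes.

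The root cause is structural: when you decompose on $\tau = k$ and discretize $S_k \in [m, m+1)$, each block of $k$'s of size $\asymp m^2$ near an endpoint contributes the same order, so the $k$-sum accumulates a factor $\log m$; the same overcounting is visible in the union bound $\sum_k \P(\xi_{k+1} \geq S_k,\, \mathcal{E}_n)$, which can exceed $\P(\mathcal{E}_n)$ by an unbounded factor. The paper sidesteps this with a two-stage argument. It first proves the \emph{one-sided} barrier estimate (Lemma~\ref{lem:ballotSpine}), where the future carries only the barrier constraint and hence a factor $(n-k)^{-1/2}$ rather than $(n-k)^{-3/2}$; after conditioning on the last step before the crossing, the past weight becomes $(1 + (\xi-X)_+^2 \wedge k)/k^{3/2}$, which is $\asymp k^{-1/2}$ in the critical range and therefore sums without a log. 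Then, in Lemma~\ref{lem:excursionSpine}, the Markov property at time $n/2$ separates the crossing (handled by Lemma~\ref{lem:ballotSpine}, rate $n^{-1/2}$) from the return to $[0,H]$ (handled by a $\sup_z$ excursion estimate, rate $n^{-1}$, \emph{independent of} $\xi$), with time-reversal covering the case $\tau > n/2$. Thus the $\xi$-dependence and the two-sided endpoint constraint never act on the same half of the trajectory. Adopting that split at $n/2$ is the missing ingredient in your argument.
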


\section{Bounding the tail of the maximal displacement}
\label{sec:tailbehaviour}

Let $(\T,V)$ be a branching random walk satisfying \eqref{eqn:supercritical}, \eqref{eqn:boundary} and \eqref{eqn:spine}, and $M_n$ its maximal displacement at time $n$. We write $m_n = -\frac{3}{2} \log n$. The main result of the section is the following estimate on the left tail of $M_n$.
\begin{theorem}
\label{thm:tailestimate}
Assuming \eqref{eqn:supercritical}, \eqref{eqn:boundary} and \eqref{eqn:spine}, there exist $c,C>0$ such that for any $n \geq 1$ and $y \in [0,n^{1/2}]$, $c (1+y)e^{- y} \leq \P\left( M_n \geq m_n + y \right) \leq C (1  + y)e^{- y}$.
\end{theorem}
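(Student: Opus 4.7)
I plan a bent-barrier argument with a first-moment upper bound and a Paley--Zygmund lower bound. Define the concave barrier
\[
  f_n(k) := \tfrac{3}{2}\log\tfrac{n-k+1}{n+1}, \qquad 0\le k\le n,
\]
so that $f_n(0)=0$ and $f_n(n)\sim m_n$. This captures the typical shape of the ancestral trajectory of a particle reaching position $m_n+y$ at time $n$. Fix throughout a large constant $L$.

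\textbf{Upper bound.} Decompose $\{M_n\ge m_n+y\}\subseteq A\cup B$ where
\begin{align*}
A &= \{\exists\,|u|=n:V(u)\ge m_n+y,\ V(u_k)\le f_n(k)+y+L,\ k\le n\},\\
B &= \{\exists\, u\in\T,\ |u|\le n:V(u)>f_n(|u|)+y+L\}.
\end{align*}
Markov's inequality together with Lemma~\ref{lem:manytoone} reduces $\P(A)$ to
\[
  \P(A)\le\E\bigl[e^{-S_n}\ind{S_n\ge m_n+y,\ S_k\le f_n(k)+y+L,\ k\le n}\bigr].
\]
The endpoint is constrained to a window of length $O(L)$ at the top of the barrier, so $e^{-S_n}\asymp n^{3/2}e^{-y}$; after a sign-reversal in the walk, the remaining probability is controlled by Lemma~\ref{lem:excursionmarchealeatoireub} (the H\"older hypothesis on $-f_n$ is easily verified), and the three factors in its conclusion give $C(1+y)/n^{3/2}$. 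Hence $\P(A)\le C(1+y)e^{-y}$. For $\P(B)$ I stratify by the first generation $k$ where an ancestor breaks the barrier and apply Lemma~\ref{lem:manytoone}:
\[
  \P(B)\le\sum_{k=0}^n\E\bigl[e^{-S_k}\ind{S_k>f_n(k)+y+L,\ S_j\le f_n(j)+y+L,\ j<k}\bigr].
\]
Using $e^{-S_k}\le e^{-f_n(k)-y-L}=\left(\tfrac{n+1}{n-k+1}\right)^{3/2}e^{-y-L}$ and invoking Lemma~\ref{lem:excursionmarchealeatoireub} for the first-exit probability, the sum splits at $k=n/2$ into two absolutely convergent pieces of total size $Ce^{-L}(1+y)e^{-y}$; taking $L$ large yields $\P(B)\le C(1+y)e^{-y}$, and the upper bound follows.

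\textbf{Lower bound.} Define
\[
  A_n:=\#\bigl\{|u|=n:V(u)\in[m_n+y,\,m_n+y+H],\ V(u_k)\le f_n(k)+y+L,\ k\le n\bigr\},
\]
with $H$ the constant from \eqref{eqn:locallimitlb}. By Paley--Zygmund, $\P(M_n\ge m_n+y)\ge\P(A_n>0)\ge(\E A_n)^2/\E[A_n^2]$. Lemma~\ref{lem:manytoone} combined with Lemma~\ref{lem:excursionmarchealeatoirelb} (and \eqref{eqn:locallimitlb} to pin $S_n$ into a length-$H$ window) gives $\E A_n\ge c(1+y)e^{-y}$. For the second moment, Lemma~\ref{lem:secondmoment} yields $\E[A_n^2]=\E A_n+\sum_{k=0}^{n-1}\E A^{(2)}_{n,k}$, and each summand is re-expressed via the spinal decomposition: conditionally on $\calG_n$, the subtrees rooted at the siblings $u\in\Omega_k$ are independent branching random walks, so another application of Lemma~\ref{lem:manytoone} inside each subtree turns $B_{k,n}$ into a random walk quantity controlled by Lemma~\ref{lem:excursionmarchealeatoireub}. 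Summing the resulting contributions produces $\E[A_n^2]\le C(1+y)e^{-y}$, and Paley--Zygmund then closes the lower bound.

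\textbf{Main obstacle.} The technical crux is the second-moment estimate, specifically in the regime where $k$ is close to $n$: two distant cousins must simultaneously reach $m_n+y$ while respecting the barrier, and the spine near generation $n$ may give birth to many siblings capable of seeding such lineages. The integrability assumption~\eqref{eqn:spine} enters precisely at this stage, through Lemma~\ref{lem:excursionSpine}, which bounds the spine's excursion probability in terms of $\P(\xi_1\ge0)+\E[(\xi_1)_+^2]$. Balancing the spine-side factor $e^{-V(w_n)}\asymp n^{3/2}e^{-y}$ against the $1/(n-k)^{3/2}$ decay coming from two independent cousin excursions is what forces the second moment to remain of order $(1+y)e^{-y}$ rather than larger.
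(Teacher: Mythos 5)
Your upper bound is sound and close to the paper's: the paper observes that $M_n\ge m_n+y$ already forces a barrier crossing somewhere on $[0,n]$ (since $f_n(n)\ge m_n-2$), so it does not need your split into endpoint-hit $A$ and mid-barrier crossing $B$; but your decomposition costs nothing and both parts are estimated exactly as in the paper's Lemma~\ref{lem:upperfrontier}, via the many-to-one lemma, conditioning on the last step, and Lemma~\ref{lem:excursionmarchealeatoireub}. Fine.

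The lower bound, however, has a genuine gap. You define
\[
  A_n=\#\bigl\{|u|=n:V(u)\in[m_n+y,m_n+y+H],\ V(u_k)\le f_n(k)+y+L,\ k\le n\bigr\}
\]
and claim $\E[A_n^2]\le C(1+y)e^{-y}$ by running the second-moment formula of Lemma~\ref{lem:secondmoment} through the spinal decomposition. This is exactly where the argument fails. Unpacking $B_{k,n}=\sum_{u\in\Omega_k}\hat{\E}(\cdots\mid\calG_n)$ with Lemma~\ref{lem:excursionmarchealeatoireub} produces, at each generation $k$, a factor of order
\[
  \sum_{u\in\Omega_k}\bigl(1+(V(w_k)-V(u))_+\bigr)e^{V(u)-V(w_k)} = e^{\xi(w_k)},
\]
and under $\hat{\P}$ the children of $w_k$ are distributed according to the size-biased law $\hat{\calL}$, so $\hat{\E}\bigl(e^{\xi(w_k)}\bigr)=\E\bigl(W_1\sum_{|v|=1}(1+V(v)_+)e^{V(v)}\bigr)$. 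Assumption~\eqref{eqn:spine} only gives a $(\log_+)^2$-moment of this quantity under the size-biased law; it does not make $\hat{\E}(e^{\xi(w_k)})$ finite. So without further restriction the second moment is not controlled, and your claimed bound does not follow.

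This is precisely why the paper intersects $\bar{\calA}_n(y)$ with the good-offspring set
\[
  \calB_n(y,z)=\bigl\{u: \xi(u_j)\le z+\tfrac12\bigl(f_n(j)+y-V(u_j)\bigr),\ j\le |u|\bigr\}
\]
before counting. On $\calB_n$, the offending factor is tamed pathwise, $e^{\xi(w_k)}\le e^z e^{(f_n(k)+y-V(w_k))/2}$, and the exponential half-weight is then absorbed into the ballot estimate so that $\sum_k\E(Y^{(2)}_{n,k})$ converges (Lemma~\ref{lem:meanupperbound}). The price is that one must separately verify that this truncation does not kill the first moment; that is the content of Lemma~\ref{lem:meanlowerbound}, and it is there — not in the second-moment step — that assumption~\eqref{eqn:spine} and Lemma~\ref{lem:excursionSpine} actually enter. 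Your ``main obstacle'' paragraph correctly identifies the regime $k\approx n$ and the relevance of \eqref{eqn:spine} and Lemma~\ref{lem:excursionSpine}, but mislocates where they act: they are used to show the truncation is harmless for $\E A_n$, not to directly bound $\E A_n^2$. To repair the plan, build the constraint $\calB_n(y,z)$ into the definition of $A_n$, prove $\E A_n^2 \le C(1+y)e^{z-y}$ using the pathwise bound on $e^{\xi(w_k)}$, prove $\E A_n\ge c(1+y)e^{-y}$ for $z$ large via Lemma~\ref{lem:excursionSpine}, and then apply Paley--Zygmund.
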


A natural way to compute an upper bound for $\P(M_n \geq m_n + y)$ would be a direct application of the Markov inequality. We have
\[
  \P(M_n \geq m_n + y) \leq \E\left( \sum_{|u|=n} \ind{V(u) \geq m_n + y}\right) \leq \E\left( e^{ S_n} \ind{S_n \geq m_n + y} \right),
\]
by Lemma~\ref{lem:manytoone}. Therefore, as $S_n$ is a centrer random walk, we have
\[
  \P(M_n \geq m_n + y) \leq e^{- m_n- y} \sum_{h=0}^{+\infty} e^{- h} \P(S_n -m_n - y \in [h,h+1]) \leq C \frac{n^{3/2} e^{- y}}{n^{1/2}},
\]
by \eqref{eqn:locallimitub}. But this computation is not precise enough to yield Theorem~\ref{thm:tailestimate}.

To obtain a more precise upper bound, instead of computing the number of individuals that are at time $n$ greater than $m_n$, we compute for any generation $k \leq n$ the number of individuals alive at generation $k$ such that the probability that one of their children is greater than $m_n$ is larger than a given threshold. If we assume Theorem~\ref{thm:tailestimate} to be true, for an individual $u$ alive at generation $k$, the median of the position of its largest descendant is close to $V(u) + m_{n-k}$. Therefore, we compute (see Figure 6) the number of individuals that cross for the first time at time $k \leq n$ the boundary $k \mapsto f_n(k) := \frac{3}{2} \log \frac{n-k+1}{n+1} \approx m_n - m_{n-k}$.

\begin{figure}[ht]
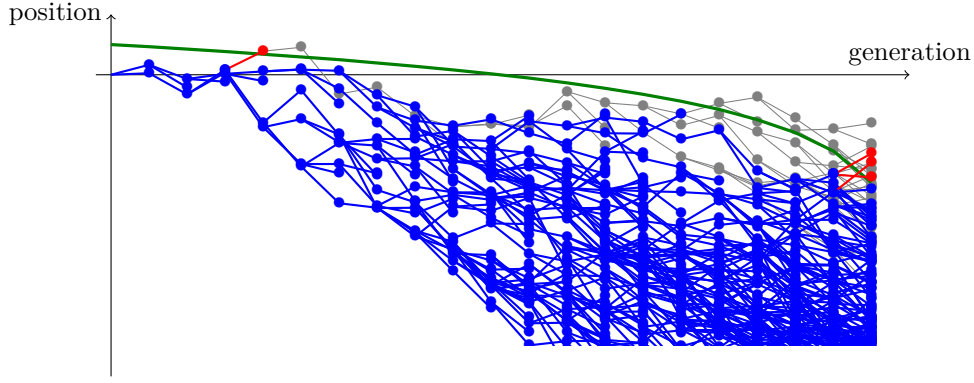

\centering
 
\caption{The boundary of the branching random walk}
\end{figure}

\begin{lemma}
\label{lem:upperfrontier}
Assuming \eqref{eqn:supercritical} and \eqref{eqn:boundary}, there exists $C>0$ such that for any $y \geq 0$,
\[
  \P\left( \exists |u| \leq n : V(u) \geq f_n(|u|)+y \right) \leq C (1+y)e^{-y}.
\]
\end{lemma}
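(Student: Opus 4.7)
The plan is to decompose the event of interest according to the \emph{first} generation at which an individual crosses the moving barrier $j\mapsto f_n(j)+y$. Set
\[
  \mathcal{A}_k = \left\{|u|=k : V(u)\ge f_n(k)+y,\ V(u_j)<f_n(j)+y \text{ for all } j<k\right\},\qquad A_k=\#\mathcal{A}_k.
\]
Then $\{\exists |u|\le n : V(u)\ge f_n(|u|)+y\}\subseteq \{\sum_{k=0}^n A_k\ge 1\}$, so by the Markov inequality and the many-to-one Lemma~\ref{lem:manytoone},
\[
  \P(\exists |u|\le n:V(u)\ge f_n(|u|)+y)\;\le\;\sum_{k=0}^n \E[A_k]\;=\;\sum_{k=0}^n \E\!\left[e^{-S_k}\indset{E_k}\right],
\]
where $E_k=\{S_k\ge f_n(k)+y,\ S_j<f_n(j)+y \text{ for } j<k\}$ and $S$ is the random walk associated to the branching walk through Lemma~\ref{lem:manytoone}. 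The whole strategy reduces to obtaining, for each $k$, an upper bound on this expectation of the correct order $(1+y)e^{-y}\,e^{-f_n(k)}k^{-3/2}$, which after summation in $k$ yields the lemma.

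Next I partition the overshoot $S_k-f_n(k)-y$ into unit intervals $[q,q+1]$ for $q\in\N$, getting
\[
  \E[A_k] \;\le\; e^{-f_n(k)-y}\sum_{q=0}^{\infty} e^{-q}\,P_{k,q},\qquad P_{k,q} := \P\bigl(S_k-f_n(k)-y\in[q,q+1],\ S_j<f_n(j)+y,\ j<k\bigr).
\]
The key step is then to prove
\begin{equation}\label{eqn:keyPkq}
  P_{k,q}\;\le\;\frac{C(1+y)(1+q)}{k^{3/2}},
\end{equation}
for all $k$ larger than some fixed constant. To obtain \eqref{eqn:keyPkq}, I switch to the sign-reversed walk $T:=-S$ and the associated curve $g_n:=-f_n\ge 0$; an easy calculation shows that $g_n$ satisfies the hypotheses of Lemma~\ref{lem:excursionmarchealeatoireub} uniformly in $n$, for any $\alpha\in(0,1/2)$. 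In the new variables the event defining $P_{k,q}$ becomes $\{T_j\ge g_n(j)-y \text{ for } j<k,\ T_k-g_n(k)+y\in[-q-1,-q]\}$.

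The delicate point, and the one I expect to be the main obstacle, is that at time $k$ the walk $T$ drops \emph{below} the curve $g_n-y$, whereas Lemma~\ref{lem:excursionmarchealeatoireub} only bounds trajectories that remain above the curve throughout. I will circumvent this by applying the Markov property at time $k-1$: partitioning on the value $T_{k-1}-g_n(k-1)+y\in[\ell,\ell+1]$ for $\ell\in\N$, Lemma~\ref{lem:excursionmarchealeatoireub} (applied with its internal parameter $h=1$, $z=\ell+1$, and time horizon $k-1$) gives
\[
  \P\bigl(T_{k-1}-g_n(k-1)+y\in[\ell,\ell+1],\,T_j\ge g_n(j)-y,\,j\le k-1\bigr)\;\le\;\frac{C(1+y)(1+\ell)}{k^{3/2}},
\]
while the last step contributes $\P(X_k\in I_{\ell,q})$, where $I_{\ell,q}$ is an interval of length $O(1)$ centred near $-q-\ell$ (up to the negligible shift $g_n(k)-g_n(k-1)$). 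The assumption $\E[X_1^2]<\infty$ from \eqref{eqn:integrabilityrw} (equivalent to \eqref{eqn:boundary}) ensures
\[
  \sum_{\ell\ge 0}(1+\ell)\,\P(X_k\in I_{\ell,q})\;\le\; C\bigl(1+\E[(-X_k)_+]+q\bigr)\;\le\; C(1+q),
\]
which after summing over $\ell$ gives exactly \eqref{eqn:keyPkq}.

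It remains to put the estimates together. Since $\sum_{q\ge 0}(1+q)e^{-q}$ is a finite constant, \eqref{eqn:keyPkq} yields $\E[A_k]\le C(1+y)e^{-y}\,e^{-f_n(k)}k^{-3/2}$ for $k$ bigger than some fixed $k_0$, while for $k\le k_0$ a crude bound $\E[A_k]\le C e^{-y}$ from Markov is enough. Using $e^{-f_n(k)}=((n+1)/(n-k+1))^{3/2}$, splitting the sum at $k=\lfloor n/2\rfloor$ gives
\[
  \sum_{k=1}^{n}\frac{1}{k^{3/2}}\!\left(\frac{n+1}{n-k+1}\right)^{\!3/2}\;\le\;C
\]
uniformly in $n$, since the first half is controlled by $\sum k^{-3/2}<\infty$ and the second by $\sum_{j\ge 1}j^{-3/2}<\infty$. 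Combined with the case $k=0$ (where $\mathcal{A}_0$ is non-empty only if $y\le 0$), this produces the claimed bound $\P(\exists|u|\le n:V(u)\ge f_n(|u|)+y)\le C(1+y)e^{-y}$.
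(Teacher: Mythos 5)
Your proof is correct and follows essentially the same strategy as the paper: decompose by the first generation at which an individual crosses the bent barrier, apply the many-to-one lemma, bound the excursion probability using Lemma~\ref{lem:excursionmarchealeatoireub} up to time $k-1$, and control the crossing step using $\E[X_1^2]<\infty$. The only cosmetic difference is that the paper conditions directly on the last increment $X_k$ and invokes Lemma~\ref{lem:excursionmarchealeatoireub} with $z=h=X_k$ (yielding $\E[(1+X_k)_+^2]$), whereas you partition both the overshoot at time $k$ and the position at time $k-1$ into unit intervals; these are equivalent ways of handling the same technical point that the walk leaves the allowed region at the final step.
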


\begin{proof}
For all $k \leq n$, we write $Z^{(n)}_k(y) = \sum_{|u|=k} \ind{V(u) \geq f_n(k) + y} \ind{V(u_j) < f_n(j) + y, j < k}$ the number of individuals crossing for the first time the curve $j \mapsto f^{(n)}_j$ at time $k$. By Lemma~\ref{lem:manytoone}, we have
\begin{align*}
  \E\left(Z^{(n)}_k(y) \right) &= \E\left[ e^{- S_k} \ind{S_k \geq f_n(k) +y} \ind{S_j \leq f_n(j) + y, j < k} \right]\\
  &\leq e^{-f_n(k) - y} \P\left(S_k \geq f_n(k) + y, S_j \leq f_n(j)+ y, j < k\right).
\end{align*}
We condition this probability with respect to the last step $S_k - S_{k-1}$ to obtain
\[
  \P\left(S_k \geq f_n(k) + y, S_j \leq f_n(j) + y, j < k\right)= \E(\phi_{k-1}(S_k - S_{k-1})).
\]
where $\phi_k(x) = \P\left(S_k \geq f_n(k) + y - x, S_j \leq f_n(j) + y, j \leq k\right)$. Applying Lemma~\ref{lem:excursionmarchealeatoireub}, there exists $C>0$ such that for all $k \leq n$ and $x \in \R$, $\phi_k(x) \leq C \ind{x \geq 0} (1 + y)(1 + x)^2(k+1)^{-3/2}$. Thus,
\begin{multline*}
  \P\left(\exists |u| \leq n : V(u) \geq f_n(|u|) + y\right) \leq \sum_{k=0}^n \P(Z^{(n)}_k(y) \geq 1) \leq \sum_{k=0}^n \E(Z^{(n)}_k(y))\\
  \leq C (1 + y) e^{- y} \sum_{k=0}^n \tfrac{(n+1)^{3/2}}{(k+1)^{3/2} (n-k+1)^{3/2}} \E\left((S_k - S_{k-1})_+^2 + 1\right).
\end{multline*}
As a consequence, we obtain
\begin{align*}
  \P(\exists |u| \leq n : V(u) \geq f_n(|u|) + y) &\leq C (1 + y) e^{-y} \left( \sum_{k=0}^{n/2} 2^{3/2} k^{-3/2} + \sum_{k=n/2}^n 2^{3/2}(n-k+1)^{-3/2} \right)\\
  &\leq C (1+y)e^{-y}.
\end{align*}
\end{proof}

This lemma directly implies the upper bound in Theorem~\ref{thm:tailestimate}.
\begin{proof}[Proof of the upper bound in Theorem~\ref{thm:tailestimate}]
As $f_n(n) = - \frac{3}{2} \log (n+1) \geq - \frac{3}{2} \log n - 2$, we observe that
\[
  \P\left( M_n \geq m_n + y \right) \leq \P\left(\exists |u| \leq n : V(u) \geq f_n(|u|) + y - 2\right).
\]
We apply Lemma~\ref{lem:upperfrontier} to conclude the proof.
\end{proof}

To obtain a lower bound for $\P(M_n \geq m_n + y)$, we apply the Cauchy-Schwarz inequality: if $X$ is an integer-valued non-negative random variable, we have $\P(X \geq 1) \geq \frac{\E(X)^2}{\E(X^2)}$. 

A good control on the second moment of the number of individuals staying below $f_n$ until time $n$ is obtained by considering only the individuals that do not make ``too many children too high''. To quantify this property, we set $\xi(u) = \log \sum_{v \in \Omega(u)} \left( 1 + (V(u)-V(v))_+ \right) e^{V(u) - V(v)}$ for any $u \in \T$. For $n \in \N$ and $y,z \geq 0$, we define the sets
\begin{align*}
  \mathcal{A}_n(y)& = \left\{ |u| \leq n : V(u_j) \leq f_n(j) + y, j \leq |u| \right\},\\
  \bar{\mathcal{A}}_n(y) &= \left\{ |u| = n : u \in \mathcal{A}_n(y), V(u) \geq f_n(n)+y-H \right\}\\
  \mathcal{B}_n(y,z)& = \left\{ |u| \leq n : \xi(u_j) \leq z + (f_n(j)+y-V(u_j))/2 , j \leq |u| \right\}.
\end{align*}
We write $\mathcal{Y}_n(y,z) = \bar{\mathcal{A}}_n(y) \cap \mathcal{B}_n(y,z)$, and take interest in $Y_n(y,z) = \#\mathcal{Y}_n(y,z)$, the number of individuals that made an excursion below $f_n$, while making not too many children. We also set
\[
  Y^{(2)}_{n,k}(y,z) = \sum_{|u|=|v|=n} \ind{|u\wedge v|=k} \ind{u \in \mathcal{Y}_n(y,z)} \ind{v \in \in \mathcal{Y}_n(y,z)},
\]
and we compute the mean of this quantity

\begin{lemma}
\label{lem:meanupperbound}
Assuming \eqref{eqn:supercritical} and \eqref{eqn:boundary}, there exists $C>0$ such that for $y,z \geq 0$ and $0 \leq k \leq n$,
\[
  \E(Y_n(y,z)) \leq C (1 + y)e^{-y} \quad \text{and} \quad
  \E\left( Y^{(2)}_{n,k}(y,z) \right) \leq C \frac{(n+1)^{3/2}}{(k+1)^{3/2}(n-k+1)^{3/2}} (1+y)e^{z - y}.
\]
In particular, $\E\left( Y_n(y,z)^2 \right) \leq C (1+y) e^{z-y}$.
\end{lemma}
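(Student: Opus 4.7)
The plan is to prove both bounds via the spinal decomposition, combining Lemmas~\ref{lem:firstmoment} and \ref{lem:secondmoment} with the random walk estimates of Section~\ref{sec:randomwalk}.

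For the first bound I would use $\mathcal{Y}_n(y,z) \subseteq \bar{\mathcal{A}}_n(y)$ to reduce to bounding $\E(\#\bar{\mathcal{A}}_n(y))$ and apply Lemma~\ref{lem:firstmoment} to rewrite this as $\hat{\E}[e^{-V(w_n)}\ind{w_n \in \bar{\mathcal{A}}_n(y)}]$. On the event considered, the lower bound $V(w_n) \geq f_n(n)+y-H$ yields $e^{-V(w_n)} \leq e^H(n+1)^{3/2}e^{-y}$, while Lemma~\ref{lem:excursionmarchealeatoireub} applied to the centred random walk $(-V(w_j))_{j \leq n}$ under $\hat{\P}$ with curve $-f_n$, initial offset $y$ and endpoint window of size $H$ bounds the remaining probability by $C(1+y)n^{-3/2}$. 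The two factors multiply to the announced $C(1+y)e^{-y}$.

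For the second moment I would apply Lemma~\ref{lem:secondmoment} to express $\E(Y_{n,k}^{(2)}) = \hat{\E}[e^{-V(w_n)}\ind{w_n \in \mathcal{Y}_n(y,z)} B_{k,n}]$ with $B_{k,n} = \sum_{u^* \in \Omega_k}\hat{\E}[\sum_{|v|=n,\,v\geq u^*}\ind{v \in \mathcal{Y}_n(y,z)}\mid \mathcal{G}_n]$. Since the subtrees $(\T^{u^*},V^{u^*})$ for $u^* \in \Omega_k$ are independent branching random walks, independent of $\mathcal{G}_n$, I would relax $\mathcal{Y}_n(y,z)$ to $\bar{\mathcal{A}}_n(y)$ (dropping the $\mathcal{B}_n$ constraint at levels $>k+1$) and apply Lemma~\ref{lem:manytoone} to the sub-BRW, followed by Lemma~\ref{lem:excursionmarchealeatoireub} with boundary $j \mapsto f_n(k+1+j)$, endpoint window $H$, and initial offset $\delta_{u^*} := (f_n(k+1)+y-V(u^*))_+$. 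This should give $E_{u^*} \leq C (n+1)^{3/2}(n-k+1)^{-3/2}(1+\delta_{u^*})e^{V(u^*)-y}$.

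The decisive step is the summation over $\Omega_k$ followed by the spinal expectation. Using the submultiplicative bound $1+\delta_{u^*} \leq (1+(f_n(k+1)+y-V(w_k))_+)(1+(V(w_k)-V(u^*))_+)$ I would factor off a $V(w_k)$-dependent piece and control the remainder $\sum_{u^* \in \Omega_k}(1+(V(w_k)-V(u^*))_+)e^{V(u^*)-V(w_k)}$ by splitting the children according to whether $V(u^*) \leq V(w_k)$ or $V(u^*) \in (V(w_k), f_n(k+1)+y]$. In each case both the summand and the cardinality of the contributing set of children can be bounded in terms of $e^{\xi(w_k)}$, using that every such child contributes at least $1$ (respectively at least $e^{-(f_n(k+1)+y-V(w_k))_+}$) to the defining sum of $e^{\xi(w_k)}$. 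Combined with the $\mathcal{B}_n(y,z)$ constraint $e^{\xi(w_k)} \leq e^z e^{(f_n(k)+y-V(w_k))/2}$, this delivers the desired $e^z$ factor. Finally, taking $\hat{\E}$ and splitting the spine at time $k$ via the Markov property, I would apply Lemma~\ref{lem:excursionmarchealeatoireub} to each of $[0,k]$ and $[k,n]$ to generate the $(k+1)^{-3/2}(n-k+1)^{-3/2}$ geometry, with $(1+y)$ coming from the first excursion and the $(n+1)^{3/2}e^{-y}$ factor from $e^{-V(w_n)}$ at the endpoint. The $L^2$ bound on $Y_n(y,z)$ then follows from $Y_n^2 = Y_n + \sum_k Y_{n,k}^{(2)}$ and the fact that $\sum_{k=0}^{n-1}(n+1)^{3/2}(k+1)^{-3/2}(n-k+1)^{-3/2} = O(1)$. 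The hard part will be precisely this bookkeeping: the residual $V(w_k)$-dependent factors produced by the $\xi$-based bound on $\sum_{u^*} E_{u^*}$ must be in exactly the shape that is absorbed by the ballot estimate on $[0,k]$, without spoiling either the $(1+y)$ prefactor or the scaling in $k$ and $n-k$.
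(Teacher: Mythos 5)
Your first-moment bound tracks the paper's argument exactly. For the second moment your architecture is also the paper's: expand $A_n^2$ into ancestor blocks via Lemma~\ref{lem:secondmoment}, relax $\mathcal{Y}_n$ to $\bar{\mathcal{A}}_n$ on the sub-BRW rooted at each $u^*\in\Omega_k$, apply Lemma~\ref{lem:firstmoment} and Lemma~\ref{lem:excursionmarchealeatoireub} there, and finally split the spine at time $k$. That part is fine.

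Where you overcomplicate things is precisely the step you flag as ``decisive.'' After the submultiplicative bound $1+\delta_{u^*}\leq(1+(f_n(k)+y-V(w_k))_+)(1+(V(w_k)-V(u^*))_+)$ (which works, since $(1+(a+b)_+)\leq(1+a_+)(1+b_+)$ and $f_n(k+1)\leq f_n(k)$), you propose to control $\sum_{u^*\in\Omega_k}(1+(V(w_k)-V(u^*))_+)e^{V(u^*)-V(w_k)}$ by splitting children into cases and counting. There is nothing to split: by the very definition
\[
  \xi(w_k)=\log\sum_{v\in\Omega(w_k)}\bigl(1+(V(w_k)-V(v))_+\bigr)e^{V(v)-V(w_k)},
\]
and $\Omega_k=\Omega(w_k)\setminus\{w_{k+1}\}$, the sum you want to control \emph{is} bounded by $e^{\xi(w_k)}$ term-for-term, with one non-negative term dropped. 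This is the entire reason $\xi$ is defined with that exact integrand; the good-set condition $\mathcal{B}_n(y,z)$ then hands you $e^{\xi(w_k)}\leq e^z e^{(f_n(k)+y-V(w_k))/2}$ directly, giving $Z_{k,n}\leq C e^{\xi(w_k)}\phi(f_n(k)+y-V(w_k))$ with $\phi(x)=(1+x_+)e^{-x}$. Your case-analysis would probably yield the same thing with effort, but it risks losing exactly the Lipschitz and cancellation structure you worry about in your last sentence.

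On that cancellation: once you carry the $(n+1)^{3/2}/(n-k+1)^{3/2}$ factor from Lemma~\ref{lem:excursionmarchealeatoireub} alongside $e^{V(w_k)-y}$, the identity $e^{f_n(k)}=\bigl((n-k+1)/(n+1)\bigr)^{3/2}$ is what kills the residual $k$-dependence and leaves only $\phi(f_n(k)+y-V(w_k))$; after that, conditioning on $f_n(k)+y-V(w_k)\in[h,h+1]$ and applying Lemma~\ref{lem:excursionmarchealeatoireub} on $[0,k]$ and $[k,n]$ separately produces the $(1+y)(1+h)^2/(k+1)^{3/2}(n-k+1)^{3/2}$, and the sum over $h$ against $\phi(h/2)$ converges. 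So your plan is sound and essentially the paper's proof; the one substantive simplification you should adopt is that $\sum_{u^*\in\Omega_k}(1+(V(w_k)-V(u^*))_+)e^{V(u^*)-V(w_k)}\leq e^{\xi(w_k)}$ holds tautologically, with no further argument.
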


\begin{proof}
We first apply Lemma~\ref{lem:firstmoment}, we have
\begin{align*}
  \E\left( Y_n(y,z) \right) &= \hat{\E}\left( e^{-V(w_n)} \ind{w_n \in \mathcal{Y}_n(y,z)} \right)
  \leq \hat{\E} \left( e^{-V(w_n)} \ind{w_n \in \bar{\mathcal{A}}_n(y)} \right)\\
  &\leq (n+1)^{3/2}e^{H-y} \hat{\P}\left(V(w_n) \geq f_n(n) + y - H, V(w_j) \leq f_n(j) + y, j \leq n \right)\\
  &\leq C(1+y)e^{-y} \quad \text{by Lemma~\ref{lem:excursionmarchealeatoireub}.}
\end{align*}

To bound the mean of $Y^{(2)}_{n,k}$, applying Lemma~\ref{lem:secondmoment} we can write
\begin{equation}
  \E\left( Y^{(2)}_{n,k}(y,z) \right) = \hat{\E}\left( e^{-V(w_n)} \ind{w_n \in \mathcal{Y}_n(y,z)} Z_{k,n} \right),
  \label{eqn:tomerge}
\end{equation}
where $Z_{k,n} = \sum_{u \in \Omega_k} \hat{\E}\left( \left. \sum_{|v|=n, v \geq u} \ind{v \in \mathcal{Y}_n(y,z)} \right| \mathcal{G}_n \right)$. We observe that by definition of $\hat{\P}$, for any $k < n$ and $u \in \Omega_k$, the subtree $(\T^u,V^u)$ is a branching random walk independent of $\mathcal{G}_n$. Therefore, applying Lemma~\ref{lem:firstmoment}, we have
\begin{align*}
  &\hat{\E}\left( \left. \sum_{|v|=n, v \geq u} \ind{v \in \mathcal{Y}_{n}(y,z)} \right| \mathcal{G}_n \right)\\
  \leq & \E_{V(u)} \left( \sum_{|v|=n-k-1} \ind{V(v) \geq f_n(n)+y-H, V(v_j) \leq f_n(j+k+1)+y, j \leq n-k-1} \right)\\
  \leq & e^{V(u)} \hat{\E}_{V(u)} \left( e^{-V(w_{n-k-1})} \ind{V(w_{n-k-1}) \geq f_n(n)+y-H, V(w_j) \leq f_n(j+k+1)+y, j \leq n-k-1}\right)\\
  \leq & (n+1)^{3/2} e^{V(u)-y} \hat{\P}_{V(u)} \left(\begin{array}{l}V(w_{n-k-1}) \geq f_n(n)+y-H,\\ V(w_k) \leq f_n(j+k+1)+y, j \leq n-k-1 \end{array}\right)\\
  \leq & C \frac{(n+1)^{3/2}}{(n-k+1)^{3/2}}(1+(f_n(k+1)+y-V(u))_+) e^{V(u)-y},
\end{align*}
by Lemma~\ref{lem:excursionmarchealeatoireub} again. As $x \mapsto x_+$ is 1-Lipschitz, we obtain
\begin{align*}
  Z_{k,n} &\leq C \frac{(n+1)^{3/2}}{(n-k+1)^{3/2}} (1 + (f_n(k)+y-V(w_k))_+)e^{V(w_k)-y} e^{\xi(w_k)}\\
  &\leq Ce^{\xi(w_k)} \phi(f_n(k)+y-V(w_k)),
\end{align*}
where $\phi : x \mapsto (1 + x_+)e^{-x}$. For $k < n$, \eqref{eqn:tomerge} becomes
\begin{align*}
  \E\left( Y^{(2)}_{n,k}(y,z) \right)
  \leq &C \hat{\E}\left( e^{\xi(w_k)-V(w_n)} \phi(f_n(k)+y-V(w_k)) \ind{w_n \in \mathcal{Y}_n(y,z)} \right)\\
  \leq &C (n+1)^{3/2}e^{z-y} \hat{\E}\left( \phi((f_n(k)+y-V(w_k))/2)\ind{w_n \in \bar{\mathcal{A}}_n(y)} \right),
\end{align*}
as $w_n \in \mathcal{B}_n(y,z)$. Conditioning on the value taken at time $j$ by $V(w_j)-f_n(j)-y$, we have
\begin{align*}
  &\hat{\E}\left(\phi((f_n(k)+y-V(w_k))/2)\ind{w_n \in \bar{\mathcal{A}}_n(y)} \right)\\
  \leq & C\sum_{h=0}^{+\infty} \phi((h+1)/2) \hat{\P}\left( w_n \in \bar{\mathcal{A}}_n(y), f_n(k)+y-V(w_k)\in [h,h+1] \right)\\
  \leq & C\sum_{h=0}^{+\infty} (h+2) e^{-h/2} C\frac{(1+y)(1+h)}{(k+1)^{3/2}} \frac{(1+h)}{(n-k+1)^{3/2}}\\
  \leq & C \frac{1+y}{(k+1)^{3/2}(n-k+1)^{3/2}},
\end{align*}
applying Lemma~\ref{lem:excursionmarchealeatoireub} on the intervals $[0,j]$ and $[j,n]$. We conclude that
\[
  \E\left( Y^{(2)}_{n,k}(y,z) \right) \leq C \frac{(n+1)^{3/2}}{(k+1)^{3/2}(n-k+1)^{3/2}} (1+y)e^{z-y}.
\]
Moreover, using again Lemma \ref{lem:secondmoment}, we have
\[
\E\left( Y_n(y,z)^2 \right) = \E(Y_n(y,z)) + \sum_{k=0}^{n-1} \E\left( Y^{(2)}_{n,k}(y,z) \right) \leq C (1+y)e^{z-y}
\]
\end{proof}

We now bound from below the mean of $Y_n(y,z)$.
\begin{lemma}
\label{lem:meanlowerbound}
Assuming \eqref{eqn:supercritical}, \eqref{eqn:boundary} and \eqref{eqn:spine}, there exist $c>0$ and $Z \geq 1$ such that for all $y \in [0,n^{1/2}]$, $z \geq Z$, and $n \in \N$, $\E(Y_n(y,z)) \geq c(1+y)e^{-y}$.
\end{lemma}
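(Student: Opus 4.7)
The strategy is to apply the first-moment formula (Lemma~\ref{lem:firstmoment}) to rewrite $\E(Y_n(y,z))$ as a spinal expectation, and then to lower bound the corresponding spinal probability using Lemma~\ref{lem:excursionmarchealeatoirelb} for the main contribution together with Lemma~\ref{lem:excursionSpine} and the integrability hypothesis \eqref{eqn:spine} to absorb the error coming from the constraint $\mathcal{B}_n(y,z)$.

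First I would use Lemma~\ref{lem:firstmoment} to write
\[
\E(Y_n(y,z)) = \hat{\E}\!\left(e^{-V(w_n)}\,\ind{w_n \in \mathcal{Y}_n(y,z)}\right),
\]
and exploit that $V(w_n) \leq f_n(n)+y = -\tfrac{3}{2}\log(n+1)+y$ on $\bar{\mathcal{A}}_n(y)$ to get the deterministic bound $e^{-V(w_n)} \geq (n+1)^{3/2}e^{-y}$. This reduces the claim to the spinal lower bound
\[
\hat{\P}\bigl(w_n \in \mathcal{Y}_n(y,z)\bigr) \geq c\,(1+y)(n+1)^{-3/2}
\]
uniformly in $y \in [0,n^{1/2}]$ and $z \geq Z$. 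I would then decompose
\[
\hat\P(w_n \in \mathcal{Y}_n(y,z)) \geq \hat\P(w_n \in \bar{\mathcal{A}}_n(y)) - \hat\P\bigl(w_n \in \bar{\mathcal{A}}_n(y),\ w_n \notin \mathcal{B}_n(y,z)\bigr).
\]

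By the spinal decomposition and \eqref{eqn:boundary}, the process $(V(w_k))$ is, under $\hat\P$, a centred random walk with variance $\sigma^2$. Lemma~\ref{lem:excursionmarchealeatoirelb} applied with $\lambda = 3/2$ to this walk (after the standard symmetrisation matching its sign convention to that of $\bar{\mathcal{A}}_n$) yields
\[
\hat\P\bigl(w_n \in \bar{\mathcal{A}}_n(y)\bigr) \geq c_1(1+y)(n+1)^{-3/2}.
\]
For the error term I would observe that on $\{w_n \in \bar{\mathcal{A}}_n(y)\}\cap\{w_n \notin \mathcal{B}_n(y,z)\}$ there exists $j \leq n$ with $V(w_j) > f_n(j) + y - 2(\xi(w_j)-z)_+$, while still $V(w_j) \leq f_n(j)+y$; this is exactly the failure event $\{\tau < n\}$ of Lemma~\ref{lem:excursionSpine} for the enriched random walk with perturbation $\xi_{k+1} := 2(\xi(w_k)-z)_+$. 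Lemma~\ref{lem:excursionSpine} then gives
\[
\hat\P\bigl(w_n \in \bar{\mathcal{A}}_n(y),\ w_n \notin \mathcal{B}_n(y,z)\bigr) \leq C(1+y)(n+1)^{-3/2}\,\varepsilon(z),
\]
where $\varepsilon(z) := \hat\P(\xi(w_0) \geq z) + 4\,\hat\E\bigl((\xi(w_0)-z)_+^2\bigr)$.

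To finish, the hypothesis \eqref{eqn:spine} translates, via the explicit formula for $\hat{\calL}$ as a size-biased version of $\calL$, into $\hat\E(\xi(w_0)_+^2) < +\infty$. Dominated convergence then yields $\varepsilon(z) \to 0$ as $z \to +\infty$, so choosing $Z$ large enough that $C\,\varepsilon(Z) \leq c_1/2$ gives $\hat\P(w_n \in \mathcal{Y}_n(y,z)) \geq (c_1/2)(1+y)(n+1)^{-3/2}$ for every $z \geq Z$, and the announced lower bound follows after multiplication by $(n+1)^{3/2}e^{-y}$. The main technical point I expect to require care is the passage from the abstract hypothesis \eqref{eqn:spine} on the reproduction law under $\P$ to the integrability $\hat\E(\xi(w_0)_+^2)<+\infty$ of the spinal functional under $\hat\P$; the sign and curve-shape bookkeeping needed to match the conventions of Lemmas~\ref{lem:excursionmarchealeatoirelb} and~\ref{lem:excursionSpine} with the definition of $\bar{\mathcal{A}}_n$ and $\mathcal{B}_n$ is routine but easy to get wrong.
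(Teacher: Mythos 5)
Your proposal follows the paper's own proof essentially step for step: reduce to a spinal probability via Lemma~\ref{lem:firstmoment} and the deterministic bound $e^{-V(w_n)} \geq (n+1)^{3/2}e^{-y}$, split off the $\mathcal{B}_n(y,z)^c$ event, lower bound the main term by Lemma~\ref{lem:excursionmarchealeatoirelb}, control the error by Lemma~\ref{lem:excursionSpine} with the perturbation $\xi_{k+1} = 2(\xi(w_k)-z)_+$, and send $z \to \infty$ using $\hat\E(\xi(w_0)_+^2) < \infty$ from \eqref{eqn:spine}. This is the same argument as in the paper, correctly executed; the factor-of-$4$ inside your $\varepsilon(z)$ and your explicit identification of $\{w_n \notin \mathcal{B}_n(y,z)\}$ with $\{\tau < n\}$ are just slightly more bookkeeping than the paper chooses to display.
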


\begin{proof}
Let $n \in \N$, $y \in [0,n^{1/2}]$ and $z \geq 1$. By Lemma~\ref{lem:firstmoment}, we have
\[  \E(Y_n(y,z))\geq \hat{\E}\left[ e^{-V(w_n)} \ind{w_n \in \mathcal{Y}_n(y,z)} \right]
  \geq n^{3/2} e^{-y} \hat{\P}(w_n \in \mathcal{Y}_n(y,z)).
\]
To bound this probability, we observe first that
\[
  \hat{\P}(w_n \in \bar{\calA}_n(y) \cap \calB_n(y,z)) = \hat{\P}(w_n \in \bar{\calA}_n(y)) - \hat{\P}(w_n \in \bar{\calA}_n(y) \cap \calB_n(y,z)^c),
\]
and $\hat{\P}(w_n \in \bar{\calA}_n(y)) \geq c (1+y) n^{-3/2}$ by Lemma~\ref{lem:excursionmarchealeatoirelb}. Moreover, by Lemma~\ref{lem:excursionSpine}, there exists $C>0$ such that
\[
  \hat{\P}(w_n \in \bar{\calA}_n(y) \cap \calB_n(y,z)^c) \leq C \frac{1+y}{n^{3/2}} \left( \P(\xi (w_0) \geq  z) + \hat{\E}((\xi(w_0) -z)_+)^2 \right).
\]
By \eqref{eqn:spine}, we have $\hat{\E}((\xi(w_0))_+^2)<+\infty$, therefore by dominated convergence, we have
\[
  \lim_{z \to +\infty} \sup_{n \in \N, y \geq 0} \frac{n^{3/2}}{1+y}\hat{\P}(w_n \in \bar{\calA}_n(y) \cap \calB_n(z)^c) = 0,
\]
thus for all $z \geq 1$ large enough we have $\hat{\P}(w_n \in \bar{\calA}_n(y) \cap \calB_n(z)) \geq c(1+y) n^{-3/2}$, which ends the proof.
\end{proof}

These two lemmas are used to complete the proof of Theorem~\ref{thm:tailestimate}.
\begin{proof}[Lower bound in Theorem~\ref{thm:tailestimate}]
By the Cauchy-Schwarz inequality, we have
\[ \P(Y_n(y,z) \geq 1) \geq \frac{\E(Y_n(y,z))^2}{\E(Y_n(y,z)^2)}. \]
Using Lemmas~\ref{lem:meanupperbound} and~\ref{lem:meanlowerbound}, there exists $z \geq 1$ such that
\[  \P(Y_n(y,z) \geq 1) \geq \frac{\left(c(1 + y)e^{- y}\right)^2}{C(1 + y) e^{z-y}}\geq c(1+y)e^{-y}.\]
As a consequence, we conclude $\P(M_n \geq m_n +y) \geq \P(G_n(y,z) \neq \emptyset) \geq c (1 + y)e^{- y}$.
\end{proof}

Lemma \ref{lem:meanupperbound} can also be used to prove that with high probability, individuals in the branching random walk above $m_n$ are either close relatives or their most recent common ancestor is not far from the root of the process.
\begin{theorem}
\label{thm:genealogy}
Under the assumptions \eqref{eqn:supercritical}, \eqref{eqn:boundary} and \eqref{eqn:spine}, we have
\[
  \lim_{R \to +\infty} \limsup_{n \to +\infty} \P\left( \exists |u|,|v|=n : V(u),V(v) \geq m_n, |u\wedge v| \in [R,n-R] \right) = 0.
\]
\end{theorem}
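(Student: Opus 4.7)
The plan is to use a second moment argument on pairs of individuals above $m_n$ whose most recent common ancestor lies at generation $k \in [R, n-R]$, combined with the localisation tools from Section~\ref{sec:tailbehaviour}. The strategy parallels the proof of Theorem~\ref{thm:tailestimate}: first restrict to a good event where every candidate individual has a tame ancestral trajectory, then apply a pair moment computation modelled on Lemma~\ref{lem:meanupperbound}.

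Fix $\epsilon > 0$. By the upper bound of Theorem~\ref{thm:tailestimate}, pick $K > 0$ so that $\P(M_n \geq m_n + K) \leq \epsilon/3$ uniformly in $n$. Applying Lemma~\ref{lem:upperfrontier} with $y := K + 2H$ gives $\P(\exists |u| \leq n : V(u) \geq f_n(|u|)+y) \leq \epsilon/3$. On the intersection of these two complementary events, every individual $u$ at generation $n$ with $V(u) \geq m_n$ satisfies $V(u) \in [m_n, m_n+K]$ and $V(u_j) \leq f_n(j)+y$ for all $j \leq n$. A union bound over generations combined with Lemma~\ref{lem:firstmoment} and a variant of the $\xi$-excursion estimate of Lemma~\ref{lem:excursionSpine} then provides $z$ large, depending only on $\epsilon$ and $y$, such that with probability at least $1-\epsilon/3$ every such $u$ furthermore satisfies $\xi(u_j) \leq z + (f_n(j)+y-V(u_j))/2$ for all $j \leq n$. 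Denote by $G$ the intersection of these three good events.

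On $G$, every pair $(u,v)$ of individuals with $|u|=|v|=n$ and $V(u), V(v) \geq m_n$ belongs to the set $\mathcal{Y}'_n$ of $|u|=n$ satisfying the three conditions above. A second moment computation on $\mathcal{Y}'_n$, directly modelled on the proof of Lemma~\ref{lem:meanupperbound} but with the range $V(u) \geq f_n(n)+y-H$ replaced by the bounded window $V(u) \in [m_n, m_n+K]$ (which only rescales the $e^{-V(w_n)}$-factor by a $K$-dependent constant), yields for every $k \in \{0,\ldots,n-1\}$ a bound
\[
\E\bigl[\#\{(u,v) : |u|=|v|=n,\, u,v \in \mathcal{Y}'_n,\, |u\wedge v|=k\}\bigr] \leq C(y,z,K)\, \frac{(n+1)^{3/2}}{(k+1)^{3/2}(n-k+1)^{3/2}}.
\]

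Summing over $k \in [R, n-R]$ and comparing with $\int_R^{+\infty} x^{-3/2}\, dx$ gives $\sum_{k=R}^{n-R} (n+1)^{3/2}/((k+1)^{3/2}(n-k+1)^{3/2}) \leq C'/\sqrt{R}$ uniformly in $n$. Markov's inequality then implies that the number $N_R$ of such pairs satisfies $\P(N_R \geq 1,\, G) \leq C''(y,z,K)/\sqrt{R}$, hence
\[
\limsup_{n \to +\infty} \P\bigl(\exists |u|=|v|=n : V(u),V(v) \geq m_n,\, |u\wedge v| \in [R, n-R]\bigr) \leq \epsilon + \frac{C''}{\sqrt{R}}.
\]
Letting $R \to +\infty$ and then $\epsilon \to 0$ concludes. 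The main obstacle I anticipate is the third reduction: transferring the $\xi$-excursion control of Lemma~\ref{lem:excursionSpine}, which is formulated along the spine, to arbitrary ancestral lines of potential $m_n$-crossers. This requires summing the many-to-one identity over all generations $k \leq n$ and crucially relies on the quantitative $\log_+^2$-integrability of \eqref{eqn:spine}.
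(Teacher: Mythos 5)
Your decomposition is structurally the same as the paper's: an event where some ancestor exceeds the bent boundary $f_n(\cdot)+y$ (controlled by Lemma~\ref{lem:upperfrontier}), an event where some $m_n$-crosser has a large $\xi$ along its history (controlled by Lemmas~\ref{lem:firstmoment} and~\ref{lem:excursionSpine}), and a pair count of $m_n$-crossers with common ancestor in $[R,n-R]$ (controlled by a second-moment computation as in Lemma~\ref{lem:meanupperbound}). The one genuine divergence is in bookkeeping: you fix $y,z,K$ as functions of a target error $\epsilon$ and take $R\to\infty$ last, whereas the paper keeps $n,y,z,R$ all live until the end and resolves them with a single parameter choice ($z=\log\log R$, $y=-\log\chi(z)$). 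Your route is tidier; what you lose is the explicit quantitative decay in $R$, which the paper's single inequality chain gives for free. Your replacement of the window $V(u)\geq f_n(n)+y-H$ (used in $\bar{\mathcal A}_n(y)$) by the bounded window $[m_n,m_n+K]$ is also the correct move, since $m_n$-crossers need not satisfy $V(u)\geq f_n(n)+y-H$ once $y>H$; just note that the $e^{-V(w_n)}$-factor in the second moment then costs you a factor of order $(1+y)e^{y}$ rather than a purely $K$-dependent constant, which is harmless since $y$ is frozen in your argument.

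The ``main obstacle'' you flag at the end is in fact already dispatched by the machinery in Sections~\ref{sec:spinaldecomposition} and~\ref{sec:randomwalk}: Lemma~\ref{lem:firstmoment} converts the expected number of $m_n$-crossers with a bad $\xi$ into a \emph{spine} expectation, and Lemma~\ref{lem:excursionSpine} is precisely the random-walk estimate for a trajectory that stays under $f_n+y$ while the enriching variables $\xi$ stay controlled. No ``variant'' is required; the identification $\xi(u_j)>z+(f_n(j)+y-V(u_j))/2$ with the stopping time $\tau$ of Lemma~\ref{lem:excursionSpine} (after translating by $2z$) is exactly what the paper does when it bounds $\E(X_n(y,z))$.
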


\begin{proof}
Let $n \geq 1$. We introduce for any $y,z \geq 0$ the random variable
\[
  X_{n}(y,z) = \sum_{|u|=n} \ind{V(u) \geq m_n} \ind{u \in \mathcal{A}_n(y) \cap \mathcal{B}_n(y,z)^c}.
\]
By Lemma \ref{lem:firstmoment}, we have
\begin{align*}
  \E(X_{n}(y,z)) &= \hat{\E}\left( e^{-V(w_n)} \ind{V(w_n) \geq m_n} \ind{w_n \in \mathcal{A}_n(y) \cap \mathcal{B}_n(y,z)^c} \right)\\
  &\leq (n+1)^{3/2} \hat{\P}\left( V(w_n) \geq m_n, w_n \in \mathcal{A}_n(y) \cap \mathcal{B}_n(y,z)^c \right)\\
  &\leq C (1 + y)^3\left( \hat{\P}(\xi(w_0) \geq z) + \hat{\E}\left((\xi(w_0)-z)^2_+\right) \right)
\end{align*}
using Lemma \ref{lem:excursionSpine}. We observe that for any $y, z, R \geq 0$:
\begin{align*}
  &\P\left( \exists |u|,|v|=n : V(u),V(v) \geq m_n, |u\wedge v| \in [R,n-R] \right)\\
  \leq &\P(\exists |u|<n : V(u) \geq f_n(|u|)+y) + \P( X_n(y,z) \neq 0 ) + \P\left( \exists k \in [R,n-R] : Y^{(2)}_{n,k}(y,z) \neq 0 \right)\\
  \leq &C (1 + y)e^{-y} + \E(X_n(y,z)) + \sum_{k=R}^{n-R} \E\left( Y^{(2)}_{n,k}(y,z) \right),
\end{align*}
using Lemma \ref{lem:upperfrontier}. We apply Lemma \ref{lem:meanupperbound}, we obtain
\begin{multline*}
  \limsup_{n \to +\infty} \P\left( \exists |u|,|v|=n : V(u),V(v) \geq m_n, |u\wedge v| \in [R,n-R] \right)\\
  \leq C \left( (1 + y)e^{-y} + (1+y)^3 \chi(z) + \frac{(1+y)e^{z-y}}{R^{1/2}} \right)
\end{multline*}
where $\chi(z) = \hat{\P}(\xi(w_0) \geq z) + \hat{\E}\left((\xi(w_0)-z)^2_+\right)$ satisfies $\lim_{z \to +\infty} \chi(z) = 0$ by \eqref{eqn:spine}. We set $z = \log \log R$ and $y = - \log \chi(z)$, letting $R \to +\infty$ concludes the proof.
\end{proof}

\section{Concentration estimates for the maximal displacement}
\label{sec:conclusion}

We prove in this section that Theorem~\ref{thm:tailestimate} implies Theorem~\ref{thm:main}. To do so, we use the fact that on the survival event $S$, the size of the population in the process grows at exponential rate, as in a Galton-Watson process. More precisely, we use the following result, which is a straightforward consequence of \cite{Mal15p}[Lemma 2.9].
\begin{lemma}
\label{lem:exponentialexplosion}
Let $(\T,V)$ be a branching random walk satisfying \eqref{eqn:supercritical} and \eqref{eqn:spine}. There exists $a>0$ and $\rho>1$ such that a.s. on $S$, for all $n \geq 1$ large enough, $\# \left\{ |u|=n : V(u) \geq - n a \right\} \geq \rho^{n}$.
\end{lemma}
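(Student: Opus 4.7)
The strategy is to embed a supercritical Galton–Watson tree $\mathcal{T}^{\ast}\subseteq\T$ whose individuals automatically satisfy the linear lower bound $V(u)\geq -|u|a$. Fix $k\in\N$ and $a>0$ to be specified shortly. Declare $\emptyset\in\mathcal{T}^{\ast}$ and recursively take as children of $u\in\mathcal{T}^{\ast}$ those $\T$-descendants $v$ at relative depth $k$ satisfying $V(v_{|u|+j})-V(u)\geq -ja$ for every $1\leq j\leq k$; the constraint is enforced along the entire length-$k$ path, not only at its endpoint. The offspring law is that of
\[
  N:=\#\{|u|=k:V(u_j)\geq -ja,\ j=1,\ldots,k\},
\]
and any $u\in\mathcal{T}^{\ast}$ at depth $n$ sits in $\T$ at generation $nk$ with $V(u)\geq -nka$; moreover, each intermediate $\T$-ancestor of such a $u$ satisfies the same linear bound.

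By Lemma~\ref{lem:manytoone}, $\E[N]=\E\bigl[e^{-S_k}\indset{S_j\geq -ja,\,j\leq k}\bigr]$, where $S$ is the centered walk of variance $\sigma^2$ given by \eqref{eqn:boundary}. For any fixed $a>0$, the CLT provides $c>0$ such that for $k$ large the event $\{-2\sqrt k\leq S_k\leq -\sqrt k\}\cap\{S_j\geq -ja,\,j\leq k\}$ has probability $\geq c$, and on this event $e^{-S_k}\geq e^{\sqrt k}$; hence $\E[N]\geq c\,e^{\sqrt k}\to\infty$. Fix $k,a$ with $\E[N]>1$. The Heyde--Seneta normalization (valid without any $L\log L$ assumption) yields deterministic $c_n$ with $c_n^{1/n}\to\E[N]>1$ and a limit $W^{\ast}\geq 0$, positive on $\{\mathcal{T}^{\ast}\text{ survives}\}$, such that $\#\mathcal{T}^{\ast}_n/c_n\to W^{\ast}$ a.s. For any $\rho_0\in(1,\E[N])$, on the survival event of $\mathcal{T}^{\ast}$ we thus have $\#\mathcal{T}^{\ast}_n\geq\rho_0^n$ for all $n$ large.

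To transfer this from $\{\mathcal{T}^{\ast}\text{ survives}\}$ to $S$, attach to each $u\in\T$ the embedded tree $\mathcal{T}^{\ast}(u)$ rooted at $u$; it has the same law as $\mathcal{T}^{\ast}$, with extinction probability $q^{\ast}<1$, and for distinct $u,v$ at the same generation $\mathcal{T}^{\ast}(u),\mathcal{T}^{\ast}(v)$ depend on independent $\T$-subtrees, hence are conditionally independent given $\mathcal{F}_n$. Since $\T$ is supercritical, $Z_n=\#\{|u|=n\}\to\infty$ a.s.\ on $S$, so $\P(\text{no }|u|=n\text{ has surviving }\mathcal{T}^{\ast}(u)\mid\mathcal{F}_n)=(q^{\ast})^{Z_n}\to 0$ on $S$, and L\'evy's $0$--$1$ law yields a random $u_0\in\T$ with $\mathcal{T}^{\ast}(u_0)$ surviving a.s.\ on $S$. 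To secure $V(u_0)\geq -|u_0|a/2$, restrict the above argument to candidates obeying this bound, which exist for all $n$ large on $S$ by $M_n/n\to 0$; arranging them into an antichain across independent subtrees reinstates the L\'evy argument. Then the depth-$n$ descendants of $u_0$ in $\mathcal{T}^{\ast}(u_0)$, together with their intermediate $\T$-ancestors (controlled by the path-level constraint of paragraph one), produce at least $\rho_0^n$ $\T$-individuals at every BRW-generation $m\in[|u_0|+nk,|u_0|+(n+1)k)$ with $V\geq V(u_0)-(m-|u_0|)a\geq -am$ (since $V(u_0)\geq -|u_0|a/2\geq -|u_0|a$). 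Choosing $\rho$ slightly smaller than $\rho_0^{1/k}$ to absorb the gap between $nk$ and $m$, this yields $\#\{|u|=m:V(u)\geq -ma\}\geq \rho^m$ for every $m$ large, a.s.\ on $S$.

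The principal obstacle is the genealogical construction in the third paragraph: extracting an antichain of candidates with $V(u)\geq -|u|a/2$ on which the $\mathcal{T}^{\ast}(u)$-survival events are genuinely independent, which needs a careful combination of the weak upper-displacement bound $M_n/n\to 0$ on $S$ with the subtree-independence structure of the BRW. The CLT lower bound on $\E[N]$, the Heyde--Seneta normalization, and the block-length-$k$ interpolation are essentially routine.
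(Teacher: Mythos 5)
The paper does not give its own argument for Lemma~\ref{lem:exponentialexplosion}; it merely invokes \cite{Mal15p}, Lemma~2.9. So there is no in-paper proof to compare to, and what follows is a standalone assessment of your construction.

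The architecture — embed a supercritical Galton--Watson tree in $\T$ along a linear killing barrier, then transfer survival of that tree to $S$ — is the right one, but the CLT-and-block-length step is overengineered. With block length $k=1$ the offspring variable is simply $N(a)=\#\{|u|=1:V(u)\geq -a\}$, and Lemma~\ref{lem:manytoone} gives $\E[N(a)]=\E\bigl[e^{-S_1}\ind{S_1\geq -a}\bigr]$, which increases to $\E[e^{-S_1}]=\E\bigl[\sum_{|u|=1}1\bigr]>1$ as $a\to\infty$ by monotone convergence and \eqref{eqn:supercritical}. No CLT, excursion estimate or $k$-block tilt is needed: supercriticality holds already at $k=1$ for $a$ large, and the Seneta--Heyde normalization then applies to $N(a)$ directly.

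The genuine gap is the one you flag yourself in your third paragraph, and the route you sketch does not close it. To find candidates $u$ with $V(u)\geq -|u|a/2$ living on disjoint subtrees you appeal to $M_n/n\to 0$ on $S$, but that only yields \emph{one} such individual per large generation, and the maximizers at successive generations may all share a common ray; manufacturing an antichain of such candidates carrying independent subtrees is essentially a weak form of the lemma you are trying to prove, so the argument as written is circular. The cleaner fix is to drop the positional constraint on $u_0$ entirely. Attach $\mathcal{T}^*(u)$ to \emph{every} $u\in\T$ and run the L\'evy argument raw: on $S$ one has $Z_n\to\infty$, hence $\P\bigl(\forall\,|u|=n,\ \mathcal{T}^*(u)\text{ dies}\mid\calF_n\bigr)=(q^*)^{Z_n}\to 0$, so a.s.\ on $S$ there is a random $u_0$ with $\mathcal{T}^*(u_0)$ surviving, with $|u_0|$ and $V(u_0)$ a.s.\ finite but otherwise unconstrained. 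Now declare the constant in the lemma to be \emph{any} $A>a$ strictly. A depth-$m$ node of $\mathcal{T}^*(u_0)$ sits at $\T$-generation $N=|u_0|+m$ at a position at least $V(u_0)-ma$, and
\[
V(u_0)-ma\geq -NA
\iff
V(u_0)+|u_0|\,a+N(A-a)\geq 0,
\]
which holds for all $N$ large because $A-a>0$, irrespective of the (finite, random) values of $|u_0|$ and $V(u_0)$. Since moreover $\#\{v\in\mathcal{T}^*(u_0):\text{depth }m\}\geq\rho_0^{m}=\rho_0^{\,N-|u_0|}\geq\rho^N$ for $N$ large and any $\rho<\rho_0$, the lemma follows with the \emph{deterministic} pair $(A,\rho)$; the random quantities $|u_0|,V(u_0)$ are harmlessly absorbed into the ``for all $n$ large enough'' clause, which allows a sample-dependent threshold.
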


\begin{proof}[Proof of Theorem~\ref{thm:main}]
We recall that $m_n = \frac{3}{2} \log n$, we prove that
\[
  \lim_{y \to +\infty} \limsup_{n \to +\infty} \P(|M_n - m_n| \geq y, S) = 0.
\]
Using the upper bound of Theorem~\ref{thm:tailestimate}, we have
\[
  \limsup_{n \to +\infty} \P(M_n \geq m_n + y) \leq C(1+y) e^{- y} \underset{y \to +\infty}{\longrightarrow} 0.
\]
To complete the proof, we have to strengthen the lower bound of Theorem~\ref{thm:tailestimate}, which states
\[
  \exists c > 0, \forall n \in \N, \forall y \in [0,n^{1/2}], \P(M_n \geq m_n + y) \geq c(1 + y)e^{- y}.
\]

To do so, we observe that by Lemma~\ref{lem:exponentialexplosion}, there exists $a>0$ and $\rho>1$ such that a.s. for any $k$ large enough, there are at least $\rho^k$ individuals above $-ka$. On this event, each individual starts an independent branching random walk, and the largest of their maximal displacement at time $n-k$ is smaller than $M_n$. Therefore for any $y \geq 0$ and $k \geq 1$, we have
\[
  \P(M_n \leq m_n - y)
  \leq \P\left( \# \{ |u|=k : V(u) \geq -ka \} < \rho^k \right) + \left( 1 - \P(M_{n-k} \geq m_n - y + ka) \right)^{\rho^k},
\]
thus $\limsup_{n \to +\infty} \P(M_n \leq m_n - y) \leq \P\left( \# \{ |u|=k : V(u) \geq -ka \} < \rho^k \right) + (1 - c (ka - y)_+e^{-ka})^{\rho^k}$.
We conclude the proof choosing $y = ka - 1$ and letting $k \to +\infty$.
\end{proof}

\appendix

\section{Proof of the random walk estimates}

\subsection{Proof of Lemmas~\ref{lem:excursionmarchealeatoireub} and~\ref{lem:excursionmarchealeatoirelb}}

The proofs of these two lemmas are based on a simple observation: an excursion of a random walk above a curve can be divided into three parts. The first third of the curve is a random walk staying above the line. The last third is the same process, but reversed in time. Finally, the middle curve has to connect the two extremities. Therefore, we expect
\[
  \P(T_n \in [z-y,z+h-y], T_j \geq -y, j \leq n) \approx \P(T_j \geq -y, j \leq n/3) \P(T_{n/3} \in [0,h]) \P(T_j \geq -z, j \leq n/3)
\]
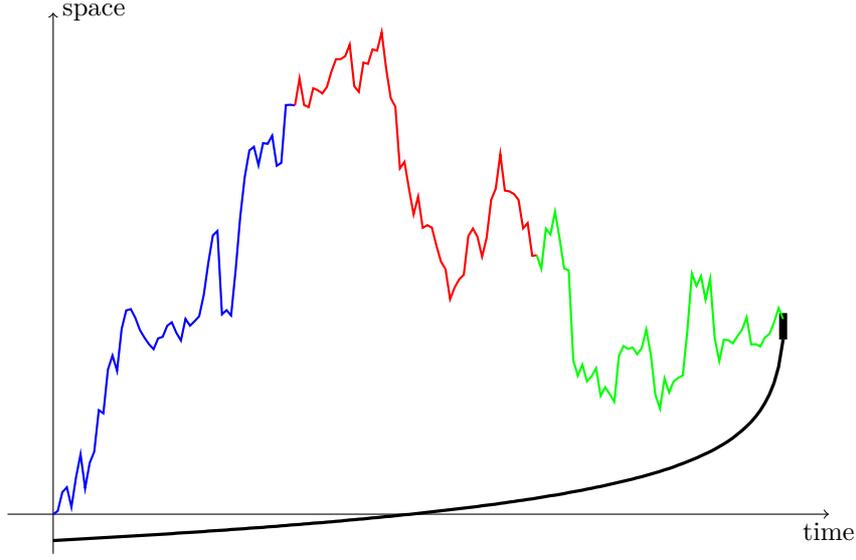
\begin{figure}[ht]
\centering
\begin{tikzpicture}[yscale=0.35, xscale=0.6]
\draw [line width=3 pt] (16.0,6.62) -- (16.0,7.62);
\draw [->] (-1,0) -- (17,0) node[below]{time};
\draw [->] (0,-1.5) -- (0,19) node[right]{space};
\draw [thick, color=blue] (0,0.0) -- (0.1,0.12) -- (0.2,0.83) -- (0.3,1.02) -- (0.4,0.27) -- (0.5,1.39) -- (0.6,2.27) -- (0.7,0.98) -- (0.8,1.94) -- (0.9,2.37) -- (1.0,3.94) -- (1.1,3.82) -- (1.2,5.48) -- (1.3,6.01) -- (1.4,5.43) -- (1.5,7.01) -- (1.6,7.72) -- (1.7,7.77) -- (1.8,7.44) -- (1.9,6.98) -- (2.0,6.69) -- (2.1,6.44) -- (2.2,6.25) -- (2.3,6.66) -- (2.4,6.72) -- (2.5,7.14) -- (2.6,7.27) -- (2.7,6.87) -- (2.8,6.58) -- (2.9,7.4) -- (3.0,7.14) -- (3.1,7.31) -- (3.2,7.5) -- (3.3,8.3) -- (3.4,9.52) -- (3.5,10.56) -- (3.6,10.73) -- (3.7,7.57) -- (3.8,7.73) -- (3.9,7.53) -- (4.0,9.26) -- (4.1,11.27) -- (4.2,12.79) -- (4.3,13.78) -- (4.4,13.92) -- (4.5,13.22) -- (4.6,14.06) -- (4.7,14.03) -- (4.8,14.34) -- (4.9,13.2) -- (5.0,13.31) -- (5.1,15.5) -- (5.2,15.51) -- (5.3,15.49);
\draw [thick, color=red] (5.3,15.49) -- (5.4,16.51) -- (5.5,15.5) -- (5.6,15.42) -- (5.7,16.14) -- (5.8,16.06) -- (5.9,15.94) -- (6.0,16.19) -- (6.1,16.78) -- (6.2,17.24) -- (6.3,17.24) -- (6.4,17.35) -- (6.5,17.79) -- (6.6,16.21) -- (6.7,16.0) -- (6.8,17.11) -- (6.9,17.06) -- (7.0,17.61) -- (7.1,17.56) -- (7.2,18.26) -- (7.3,16.86) -- (7.4,15.76) -- (7.5,15.44) -- (7.6,13.1) -- (7.7,13.35) -- (7.8,12.32) -- (7.9,11.36) -- (8.0,12.03) -- (8.1,10.85) -- (8.2,10.95) -- (8.3,10.86) -- (8.4,10.18) -- (8.5,9.57) -- (8.6,9.28) -- (8.7,8.15) -- (8.8,8.59) -- (8.9,8.9) -- (9.0,9.07) -- (9.1,10.54) -- (9.2,10.82) -- (9.3,10.51) -- (9.4,9.76) -- (9.5,10.46) -- (9.6,11.91) -- (9.7,12.33) -- (9.8,13.64) -- (9.9,12.26) -- (10.0,12.23) -- (10.1,12.13) -- (10.2,11.91) -- (10.3,10.83) -- (10.4,11.03) -- (10.5,9.79) -- (10.6,9.81);
\draw [thick, color=green] (10.6,9.81) -- (10.7,9.3) -- (10.8,10.82) -- (10.9,10.6) -- (11.0,11.46) -- (11.1,10.45) -- (11.2,9.31) -- (11.3,9.24) -- (11.4,5.83) -- (11.5,5.25) -- (11.6,5.67) -- (11.7,5.03) -- (11.8,5.22) -- (11.9,5.53) -- (12.0,4.48) -- (12.1,4.81) -- (12.2,4.55) -- (12.3,4.26) -- (12.4,5.99) -- (12.5,6.37) -- (12.6,6.26) -- (12.7,6.32) -- (12.8,6.06) -- (12.9,6.29) -- (13.0,7.0) -- (13.1,6.04) -- (13.2,4.52) -- (13.3,4.0) -- (13.4,5.14) -- (13.5,4.61) -- (13.6,5.03) -- (13.7,5.17) -- (13.8,5.25) -- (13.9,6.94) -- (14.0,9.12) -- (14.1,8.65) -- (14.2,9.02) -- (14.3,8.13) -- (14.4,8.92) -- (14.5,6.66) -- (14.6,5.8) -- (14.7,6.61) -- (14.8,6.59) -- (14.9,6.47) -- (15.0,6.75) -- (15.1,6.99) -- (15.2,7.46) -- (15.3,6.43) -- (15.4,6.44) -- (15.5,6.35) -- (15.6,6.69) -- (15.7,6.83) -- (15.8,7.26) -- (15.9,7.81) -- (16.0,7.4) ;
\draw [very thick] (0,-1.0) -- (0.1,-0.99) -- (0.2,-0.98) -- (0.3,-0.97) -- (0.4,-0.96) -- (0.5,-0.95) -- (0.6,-0.94) -- (0.7,-0.93) -- (0.8,-0.92) -- (0.9,-0.91) -- (1.0,-0.9) -- (1.1,-0.89) -- (1.2,-0.88) -- (1.3,-0.87) -- (1.4,-0.86) -- (1.5,-0.85) -- (1.6,-0.84) -- (1.7,-0.83) -- (1.8,-0.82) -- (1.9,-0.81) -- (2.0,-0.8) -- (2.1,-0.79) -- (2.2,-0.78) -- (2.3,-0.77) -- (2.4,-0.76) -- (2.5,-0.75) -- (2.6,-0.74) -- (2.7,-0.72) -- (2.8,-0.71) -- (2.9,-0.7) -- (3.0,-0.69) -- (3.1,-0.68) -- (3.2,-0.67) -- (3.3,-0.66) -- (3.4,-0.64) -- (3.5,-0.63) -- (3.6,-0.62) -- (3.7,-0.61) -- (3.8,-0.6) -- (3.9,-0.58) -- (4.0,-0.57) -- (4.1,-0.56) -- (4.2,-0.55) -- (4.3,-0.53) -- (4.4,-0.52) -- (4.5,-0.51) -- (4.6,-0.5) -- (4.7,-0.48) -- (4.8,-0.47) -- (4.9,-0.46) -- (5.0,-0.44) -- (5.1,-0.43) -- (5.2,-0.41) -- (5.3,-0.4) -- (5.4,-0.39) -- (5.5,-0.37) -- (5.6,-0.36) -- (5.7,-0.34) -- (5.8,-0.33) -- (5.9,-0.32) -- (6.0,-0.3) -- (6.1,-0.29) -- (6.2,-0.27) -- (6.3,-0.26) -- (6.4,-0.24) -- (6.5,-0.22) -- (6.6,-0.21) -- (6.7,-0.19) -- (6.8,-0.18) -- (6.9,-0.16) -- (7.0,-0.14) -- (7.1,-0.13) -- (7.2,-0.11) -- (7.3,-0.09) -- (7.4,-0.08) -- (7.5,-0.06) -- (7.6,-0.04) -- (7.7,-0.02) -- (7.8,-0.01) -- (7.9,0.01) -- (8.0,0.03) -- (8.1,0.05) -- (8.2,0.07) -- (8.3,0.09) -- (8.4,0.11) -- (8.5,0.13) -- (8.6,0.15) -- (8.7,0.17) -- (8.8,0.19) -- (8.9,0.21) -- (9.0,0.23) -- (9.1,0.25) -- (9.2,0.27) -- (9.3,0.29) -- (9.4,0.32) -- (9.5,0.34) -- (9.6,0.36) -- (9.7,0.38) -- (9.8,0.41) -- (9.9,0.43) -- (10.0,0.46) -- (10.1,0.48) -- (10.2,0.51) -- (10.3,0.53) -- (10.4,0.56) -- (10.5,0.58) -- (10.6,0.61) -- (10.7,0.64) -- (10.8,0.67) -- (10.9,0.7) -- (11.0,0.72) -- (11.1,0.75) -- (11.2,0.78) -- (11.3,0.82) -- (11.4,0.85) -- (11.5,0.88) -- (11.6,0.91) -- (11.7,0.95) -- (11.8,0.98) -- (11.9,1.02) -- (12.0,1.05) -- (12.1,1.09) -- (12.2,1.13) -- (12.3,1.17) -- (12.4,1.21) -- (12.5,1.25) -- (12.6,1.29) -- (12.7,1.33) -- (12.8,1.38) -- (12.9,1.42) -- (13.0,1.47) -- (13.1,1.52) -- (13.2,1.57) -- (13.3,1.62) -- (13.4,1.68) -- (13.5,1.73) -- (13.6,1.79) -- (13.7,1.86) -- (13.8,1.92) -- (13.9,1.99) -- (14.0,2.06) -- (14.1,2.13) -- (14.2,2.21) -- (14.3,2.29) -- (14.4,2.37) -- (14.5,2.46) -- (14.6,2.56) -- (14.7,2.66) -- (14.8,2.77) -- (14.9,2.89) -- (15.0,3.03) -- (15.1,3.17) -- (15.2,3.33) -- (15.3,3.5) -- (15.4,3.7) -- (15.5,3.93) -- (15.6,4.21) -- (15.7,4.54) -- (15.8,4.97) -- (15.9,5.58) -- (16.0,6.62) ;
\end{tikzpicture}
\caption{A random walk excursion above $k \mapsto f_n(k)$}
\end{figure}

\begin{proof}[Proof of Lemma~\ref{lem:excursionmarchealeatoireub}]
We denote by $p = \floor{n/3}$. Applying the Markov property at time $p$, we have
\begin{multline*}
  \P_y\left( T_n - f_n(n) \in [z-h,z], T_j \geq f_n(j), j \leq n \right) \\
  \leq \P_y\left( T_j \geq - A j^{\alpha}, j \leq p \right) \sup_{x \geq -f_n(p)} \P_x\left( T_n - f_n(n) \in [z-h,z], T_j \geq f_n(p+j), j \leq n-p \right).
\end{multline*}
We set $\hat{T}_k = T_{n-p} - T_{n-p-k}$, which is a random walk with the same law as $T$. Note that for any $x \in \R$;
\begin{align}
  &\P_x \left( T_{n-p} - f_n(n) \in [z-h,z], T_j \leq f_n(p+j), j \leq n-p \right)\nonumber\\
  &\qquad \leq \P\left( \hat{T}_{n-p}-f_n(n) \in [x+z-h,x+z], \hat{T}_j \leq z + f_n(n)-f_n(n-j), j \leq n-p \right)\nonumber\\
  &\qquad \leq \P_{-z} \left( T_{n-p}-f_n(n) \in [x-h,x], T_j \leq A j^\alpha, j \leq n-p \right) \label{eqn:inter}.
\end{align}
We apply again the Markov property at time $p$, we have
\begin{multline*}
  \qquad\P_{-z} \left( T_{n-p}-f_n(n) \in [x-h,x], T_j \leq A j^\alpha, j \leq n-p \right)\\
  \leq \P_{-z} \left( T_j \leq A j^\alpha, j \leq p\right) \sup_{x \in \R} \P\left( T_{n-2p} \in [x,x+h] \right).\qquad
\end{multline*}
Consequently, using \eqref{eqn:locallimitub} and \eqref{eqn:ballotub}, we obtain
\[
  \P_y\left( T_n - f_n(n) \in [z,z+h], T_j \geq f_n(j), j \leq n \right) \leq \frac{(1 + y \wedge n^{1/2})(1+ h \wedge n^{1/2})(1+z\wedge n^{1/2})}{n^{3/2}}.
\]
\end{proof}

\begin{proof}[Proof of Lemma~\ref{lem:excursionmarchealeatoirelb}]
The proof is very similar to the previous one. Let $p = \floor{n/3}$, we apply the Markov property at time $p$, to obtain, for any $\epsilon>0$ and $n \geq 1$ large enough,
\begin{multline*}
  \P_y\left( T_n \leq f_n(n) + H, T_j \geq f_n(j), j \leq n \right) \\
  \geq \P_y\left( T_p \in [p^{1/2},2p^{1/2}], T_j \geq \epsilon, j \leq p \right) \\
  \times\inf_{x \in [p^{1/2},2p^{1/2}]} \P_x\left( T_n \leq f_n(n)+H, T_j \geq f_n(p+j), j \leq n-p \right).
\end{multline*}
Setting again $\hat{T}_k = T_{n-p} - T_{n-p-k}$, for any $x \in \R$ we have
\begin{align*}
  &\P_x \left( T_{n-p} \geq f_n(n)+H, T_j \leq f_n(p+j), j \leq n-p \right)\\
  &\qquad \geq \P\left( \hat{T}_{n-p}-f_n(n) \in [x,x+H], \hat{T}_j \leq f_n(n)-f_n(n-j), j \leq n-p \right)\\
  &\qquad \geq \P \left( T_{n-p}-f_n(n) \in [x-h,x], T_j \leq f_n(n)-f_n(n-j), j \leq n-p \right).
\end{align*}
We apply again the Markov property at time $p$, for $x \in [p^{1/2},2p^{1/2}]$ we have
\begin{multline*}
  \qquad \P \left( T_{n-p}-f_n(n) \in [x-h,x], T_j \leq A j^\alpha, j \leq n-p \right)\\
  \geq \P\left( T_p \in [p^{1/2}, 2p^{1/2}], T_j \leq f_n(n)-f_n(n-j), j \leq p \right)\\
  \times \inf_{z \in [p^{1/2},2p^{1/2}]} \P\left( T_j \geq 0, T_p - f_n(n) \in [x-H,x] \right).\qquad
\end{multline*}
Thus, we apply \eqref{eqn:locallimitlb} and \eqref{eqn:ballotlb} to conclude the proof.
\end{proof}

\subsection{Proof of Lemma~\ref{lem:excursionSpine}}

We now extend Lemma \ref{lem:excursionmarchealeatoireub} to random walks enriched by additional random variables. The idea behind the proof is that a random walk excursion is typically at distance $O(n^{1/2})$ from the boundary. Therefore, as soon as $\E(\xi_+^2)$ is finite, we expect similar upper bounds for the branching random walk. To prove Lemma~\ref{lem:excursionSpine}, we first prove the following result, that is a direct consequence of \cite[Lemma C.1]{Aid13}
\begin{lemma}
\label{lem:ballotSpine}
With the notation of Lemma~\ref{lem:excursionmarchealeatoireub}, there exists $C>0$ such that for any $y \geq 0$,
\[
  \P\left(\exists k \leq n : T_k \leq f_n(k)-y + \xi_{k}, T_j \geq f_n(j)-y, j \leq n \right)\leq C\frac{1+y}{n^{1/2}}\left(\P(\xi_{1} \geq 0) + \E\left((\xi_1)_+^2\right) \right)
\]
\end{lemma}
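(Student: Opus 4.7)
The strategy is a union bound over $k$ followed by careful conditioning to decouple $\xi_k$ from the random walk. First, note that $T_k \geq f_n(k)-y$ together with $T_k \leq f_n(k)-y+\xi_k$ forces $\xi_k \geq 0$, so the contribution from $\xi_k < 0$ vanishes. I would decompose the event by integer levels $\xi_k \in [h,h+1)$ for $h \in \mathbb{N}$, reducing the problem to estimating, for each $k$ and $h$,
\[
\P_{k,h} := \P\left(T_j \geq f_n(j)-y, j\leq n,\; T_k - f_n(k)+y \in [0,h+1],\; \xi_k \in [h,h+1)\right).
\]

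The key structural fact is that, since $(X_j,\xi_j)$ are i.i.d. pairs, the auxiliary variable $\xi_k$ is independent of all increments $(X_j)_{j \neq k}$, even though it may be correlated with $X_k$ itself. The plan is to apply the Markov property at time $k$ to split $\P_{k,h}$ into a ``before'' contribution on $[0,k]$ and an ``after'' contribution on $[k,n]$. For the latter, the ballot bound \eqref{eqn:ballotub} applied conditionally on $T_k$ gives a factor at most $C(1+h)/(n-k+1)^{1/2}$, since $T_k - f_n(k)+y \leq h+1$. For the former, Lemma~\ref{lem:excursionmarchealeatoireub} applied to the walk on $[0,k]$ with pinning window of size $h+1$ at height at most $h+1$ above the boundary $f_n - y$ yields a factor at most $C(1+y)(1+h)^{2}/k^{3/2}$. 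Combining these via Markov and marginalising over the joint law of $(X_k,\xi_k)$ using the tower property in $\P(\xi_k \geq h \mid X_k)$ collapses the contribution of $\xi_k$ to $\P(\xi_1 \geq h)$.

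After this step, summing first in $k$ via the elementary estimate $\sum_{k=0}^{n} k^{-3/2}(n-k+1)^{-1/2} \leq C n^{-1/2}$ (splitting at $k=n/2$, as in the proof of Lemma~\ref{lem:upperfrontier}), and then summing in $h$ via
\[
\sum_{h \geq 0} (1+h)^{2}\, \P(\xi_1 \geq h) \leq C\bigl(\P(\xi_1 \geq 0) + \E[(\xi_1)_+^{2}]\bigr),
\]
produces exactly the claimed bound $C(1+y)n^{-1/2}(\P(\xi_1\geq 0) + \E[(\xi_1)_+^{2}])$.

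The main obstacle is the cross-dependence of $X_k$ and $\xi_k$: if I pin $T_{k-1}$ directly in an interval determined by $X_k$, the pinning estimate from Lemma~\ref{lem:excursionmarchealeatoireub} naively carries an extra factor of $(1+|X_k|)$, which after integration would require joint moments of $(X_k,\xi_k)$ not assumed in \eqref{eqn:integrabilityrw}. The resolution, which is precisely the content of \cite[Lemma C.1]{Aid13}, is to reverse the order of operations: apply the ballot estimate on $[k,n]$ \emph{first} (conditionally on $T_k$), then perform the pinning at time $k$ itself using the window $[0,h+1]$ (rather than on $T_{k-1}$), so that the step $X_k$ only appears inside a constraint that averages out against $\P(\xi_1 \geq h \mid X_1)$ by the tower property. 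This is the only step where the precise structure of the joint law of $(X_1,\xi_1)$ plays a role, and once it is handled the rest is a standard random walk calculation.
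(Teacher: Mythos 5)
Your high-level plan (union bound over the first crossing time $k$, Markov property at $k$, ballot bound on $[k,n]$, excursion estimate on $[0,k]$) is the same as the paper's and is the right skeleton. The paper does \emph{not} introduce the layer-cake decomposition in integer levels of $\xi_k$; it keeps $\xi_k$ as a random variable inside the expectation, and this is not a cosmetic difference. Two things go wrong in your version.

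\emph{First}, the power count in $h$ is incompatible with the hypothesis $\E[(\xi_1)_+^2]<\infty$. Your own derivation gives a factor $(1+h)$ from the ballot bound and $(1+h)^2$ from the pinning, i.e.\ $(1+h)^3$ in total, yet the final sum you write is $\sum_{h}(1+h)^2\,\P(\xi_1\geq h)$. Even granting that smaller power, $\sum_h (1+h)^2\P(\xi_1\geq h)\asymp\E[(\xi_1)_+^{3}]$, and $\sum_h (1+h)^3\P(\xi_1\geq h)\asymp\E[(\xi_1)_+^{4}]$; neither is controlled by $\P(\xi_1\geq 0)+\E[(\xi_1)_+^2]$. (Take $\P(\xi_1\geq h)=h^{-5/2}$ for a counterexample.) The identity you would need is rather $\sum_h (1+h)^2\,\P(\xi_1\in[h,h+1))\leq C\bigl(\P(\xi_1\geq 0)+\E[(\xi_1)_+^2]\bigr)$, so you must not drop the upper bound on $\xi_k$; and you must also reduce the $h$-power back down to $2$. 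The paper gets this reduction for free from the truncations $\wedge\, k^{1/2}$ inherited from Lemma~\ref{lem:excursionmarchealeatoireub}: the $k$-sum
\[
\sum_{k\le n/2}\frac{1+a^{2}\wedge k}{k^{3/2}}\leq C(1+a)
\]
loses one power of $a$, which is exactly the missing power in your estimate. By discarding the $\wedge\,k^{1/2}$ caps and summing in $k$ only at the end against the already-crude factor $(1+h)^2/k^{3/2}$, you forgo this saving and end up needing a third (or fourth) moment of $\xi_1$.

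\emph{Second}, your ``reverse the order of operations'' device does not in fact decouple $\xi_k$ from the path constraint. The pinning event you keep at time $k$ reads $T_k-f_n(k)+y\in[0,h+1]$, and since $T_k=T_{k-1}+X_k$, after conditioning on $X_k$ this is a pinning of $T_{k-1}$ in a window whose \emph{location} (hence the parameter $z$ in Lemma~\ref{lem:excursionmarchealeatoireub}) shifts by $-X_k$. The excursion bound you then apply at time $k-1$ therefore carries a factor roughly $\bigl(1+(C+h+(X_k)_-)\wedge k^{1/2}\bigr)$; when you integrate this against $\ind{\xi_k\in[h,h+1)}$ you get an expression of the form $\E[\ind{\xi\in[h,h+1)}\,(X)_-]$ that cannot be written as $\P(\xi_1\in[h,h+1))$ times something free of $\xi$. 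The tower property $\E[\ind{A}\,\P(\xi_k\geq h\mid X_k)]$ does not collapse to $\P(A)\P(\xi_1\geq h)$ precisely because $\ind{A}$ is a function of $X_k$. The paper does not claim such a factorisation: it conditions on $(X_k,\xi_k)$, applies the excursion estimate at time $k-1$, and simply carries the resulting random quantity $(\xi_k-X_k)_+$ through the $k$-sum. The ``reversal'' you attribute to \cite{Aid13} is not what makes that argument work; it is the $\wedge\,k$ truncation and the order of summation.

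In short, the approach is correct in outline but the two key quantitative steps --- keeping the $\wedge\,k^{1/2}$ truncations so that the $k$-sum drops a power, and not pretending that the $(X_k,\xi_k)$ coupling factorizes --- are exactly where your proposal breaks down.
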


\begin{proof}
Let $n \in \N$ and $y \geq 0$. For $k < n$ we denote by
\begin{align*}
  \pi_k &= \P_y\left(T_k \leq f_n(k) + \xi_{k}, T_j \geq f_n(j), j \leq n\right)\\
  &= \E_y\left( \ind{T_k \leq f_n(k) + \xi_{k}, T_j \geq f_n(j), j \leq k+1} \P_{T_{k+1}}\left( T_j \geq f_n(k+1+j), j \leq n-k-1 \right) \right)\\
  &\leq C \E_y\left( \ind{T_k \leq f_n(k) + \xi_{k}, T_j \geq f_n(j), j \leq k} \frac{1+(T_{k}-f_n(k))}{(n-k+1)^{1/2}} \right),
\end{align*}
by Markov property and \eqref{eqn:ballotub}. Then, conditioning on $(X_{k},\xi_{k})$ and applying Lemma \ref{lem:excursionmarchealeatoireub}, we have
\[
  \pi_k \leq \frac{C}{(n-k+1)^{1/2}} \E\left( \ind{\xi_{k} \geq 0}(1 + (\xi_{k})_+) \left( \frac{(1 + y)(1 + (\xi_{k}-X_k)_+^2 \wedge k)}{(k+1)^{3/2}} \right) \right).
\]

We denote by $(X,\xi)$ a random vector with the same law as $(X_k,\xi_k)$. We have
\begin{multline*}
  \P\left(\exists k \leq n : T_k \leq f_n(k)-y + \xi_{k}, T_j \geq f_n(j)-y, j \leq n \right) \leq \sum_{k=0}^{n-1} \pi_k\\
  \leq C (1+y) \E\left( \ind{\xi \geq 0}(1 + (\xi)_+) \sum_{k=0}^{n-1} \frac{1 + (\xi-X)_+^2 \wedge k}{k^{3/2}(n-k+1)^{1/2}} \right).
\end{multline*}
Observing that $\sum_{k=0}^{n/2} \frac{1 + (\xi-X)_+^2 \wedge k}{k^{3/2}} \leq C (1 + (\xi-X)_+)$, we conclude that
\[
  \P\left(\exists k \leq n : T_k \leq f_n(k)-y + \xi_{k}, T_j \geq f_n(j)-y, j \leq n \right) \leq C\frac{1+y}{n^{1/2}}\left(\P(\xi_{1} \geq 0) + \E\left((\xi_1)_+^2\right) \right).
\]
\end{proof}

\begin{proof}[Proof of Lemma~\ref{lem:excursionSpine}]
We use Lemma \ref{lem:ballotSpine} to prove this result. Recalling that
\[
  \tau = \inf\{ k < n : T_k \leq f_n(k)+ \xi_{k+1} \} = \inf\{ k < n : T_{k+1} \leq f_n(k) + (\xi_{k+1}+X_{k+1}) \},
\]
we have
\begin{multline*}
  \qquad \P_y\left(T_n \leq f_n(n)+H, \tau < n, T_j \geq f_n(j), j \leq n\right)\\
   \leq \P_y\left(T_n \leq f_n(n)+H, \tau \leq n/2, T_j \geq f_n(j), j \leq n\right) \\
   + \P_y\left(T_n \leq f_n(n)+H, \tau \in (n/2,n], T_j \geq f_n(j), j \leq n\right).\qquad
\end{multline*}

To bound the first term, we apply the Markov property at time $n/2$, we have
\begin{align*}
  &\P_y\left(T_n \leq f_n(n)+H, \tau \leq n/2, T_j \geq f_n(j), j \leq n\right)\\
  \leq &\P_y \left( \tau < n/2, T_j \geq f_n(j), j \leq n/2 \right) \sup_{z \in \R} \P_z \left( T_n \leq f_n(n) + H, T_j \geq f_n(n/2+j), j \leq n/2 \right)\\
  \leq & C\frac{1+y}{n^{1/2}}\left(\P(\xi_{1}+X_1 - 1 \geq 0) + \E\left((\xi_1+X-1)_+^2\right) \right) \times \frac{1}{n},
\end{align*}
by Lemma \ref{lem:ballotSpine} and \eqref{eqn:inter}.

We denote by $\hat{T}_j = T_n - T_{n-j}$ and $\hat{\xi}_j = \xi_{n-j}$. To bound the second term, we observe that
\begin{align*}
  &\P_y\left(T_n \leq f_n(n)+H, \tau \in (n/2,n], T_j \geq f_n(j), j \leq n\right)\\
  \leq & \P\left( \begin{array}{l}\hat{T}_n - f_n(n)+y \in [0,H], \hat{T}_j \leq f_n(n)-f_n(n-j), j \leq n,\\
   \exists k < n/2 : \hat{T}_k \geq f_n(n)-f_n(n-k)-\hat{\xi}_{k-1}\end{array}\right)\\
   \leq & \P\left( T_n - f_n(n) + y \in [0,H], T_j \leq f_n(n)- f_n(n-j), \exists k < n/2 : T_k \geq f_n(n)-f_n(n-k) - \xi_{k-1} \right)\\
  \leq & C \frac{1+y}{n^{3/2}} \left(\P(\xi_{1}+X_1-1 \geq 0) + \E\left((\xi_1 + X_1-1)_+^2\right) \right),
\end{align*}
using the previous inequalities, which concludes the proof.
\end{proof}

\bibliographystyle{plain}

\begin{thebibliography}{10}

\bibitem{ABR08}
L.~Addario-Berry and B.~A. Reed.
\newblock Ballot theorems, old and new.
\newblock In {\em Horizons of combinatorics}, volume~17 of {\em Bolyai Soc.
  Math. Stud.}, pages 9--35. Springer, Berlin, 2008.

\bibitem{ABR09}
Louigi Addario-Berry and Bruce Reed.
\newblock Minima in branching random walks.
\newblock {\em Ann. Probab.}, 37(3):1044--1079, 2009.

\bibitem{ABBS13}
E.~A{\"{\i}}d{\'e}kon, J.~Berestycki, {\'E}.~Brunet, and Z.~Shi.
\newblock Branching {B}rownian motion seen from its tip.
\newblock {\em Probab. Theory Related Fields}, 157(1-2):405--451, 2013.

\bibitem{Aid13}
Elie A{\"{\i}}d{\'e}kon.
\newblock Convergence in law of the minimum of a branching random walk.
\newblock {\em Ann. Probab.}, 41(3A):1362--1426, 2013.

\bibitem{AiS10}
Elie A{\"{\i}}d{\'e}kon and Zhan Shi.
\newblock Weak convergence for the minimal position in a branching random walk:
  a simple proof.
\newblock {\em Period. Math. Hungar.}, 61(1-2):43--54, 2010.

\bibitem{ABK11}
L.-P. Arguin, A.~Bovier, and N.~Kistler.
\newblock Genealogy of extremal particles of branching {B}rownian motion.
\newblock {\em Comm. Pure Appl. Math.}, 64(12):1647--1676, 2011.

\bibitem{AtN04}
K.~B. Athreya and P.~E. Ney.
\newblock {\em Branching processes}.
\newblock Dover Publications, Inc., Mineola, NY, 2004.
\newblock Reprint of the 1972 original [Springer, New York; MR0373040].

\bibitem{BeG11}
J.~B{\'e}rard and J.-B. Gou{\'e}r{\'e}.
\newblock Survival probability of the branching random walk killed below a
  linear boundary.
\newblock {\em Electron. J. Probab.}, 16:no. 14, 396--418, 2011.

\bibitem{Big76}
J.~D. Biggins.
\newblock The first- and last-birth problems for a multitype age-dependent
  branching process.
\newblock {\em Advances in Appl. Probability}, 8(3):446--459, 1976.

\bibitem{BiK05}
J.~D. Biggins and A.~E. Kyprianou.
\newblock Fixed points of the smoothing transform: the boundary case.
\newblock {\em Electron. J. Probab.}, 10:no. 17, 609--631, 2005.

\bibitem{CaC08}
Francesco Caravenna and Lo{\"{\i}}c Chaumont.
\newblock Invariance principles for random walks conditioned to stay positive.
\newblock {\em Ann. Inst. Henri Poincar\'e Probab. Stat.}, 44(1):170--190,
  2008.

\bibitem{Che15}
Xinxin Chen.
\newblock A necessary and sufficient condition for the nontrivial limit of the
  derivative martingale in a branching random walk.
\newblock {\em Adv. in Appl. Probab.}, 47(3):741--760, 2015.

\bibitem{Ham74}
J.~M. Hammersley.
\newblock Postulates for subadditive processes.
\newblock {\em Ann. Probability}, 2:652--680, 1974.

\bibitem{HuS09}
Yueyun Hu and Zhan Shi.
\newblock Minimal position and critical martingale convergence in branching
  random walks, and directed polymers on disordered trees.
\newblock {\em Ann. Probab.}, 37(2):742--789, 2009.

\bibitem{KaP76}
J.-P. Kahane and J.~Peyri{\`e}re.
\newblock Sur certaines martingales de {B}enoit {M}andelbrot.
\newblock {\em Advances in Math.}, 22(2):131--145, 1976.

\bibitem{Kin75}
J.~F.~C. Kingman.
\newblock The first birth problem for an age-dependent branching process.
\newblock {\em Ann. Probability}, 3(5):790--801, 1975.

\bibitem{Koz76}
M.~V. Kozlov.
\newblock The asymptotic behavior of the probability of non-extinction of
  critical branching processes in a random environment.
\newblock {\em Teor. Verojatnost. i Primenen.}, 21(4):813--825, 1976.

\bibitem{Lyo97}
Russell Lyons.
\newblock A simple path to {B}iggins' martingale convergence for branching
  random walk.
\newblock In {\em Classical and modern branching processes ({M}inneapolis,
  {MN}, 1994)}, volume~84 of {\em IMA Vol. Math. Appl.}, pages 217--221.
  Springer, New York, 1997.

\bibitem{LPP95}
Russell Lyons, Robin Pemantle, and Yuval Peres.
\newblock Conceptual proofs of {$L\log L$} criteria for mean behavior of
  branching processes.
\newblock {\em Ann. Probab.}, 23(3):1125--1138, 1995.

\bibitem{Mal15p}
Bastien Mallein.
\newblock Branching random walk with selection at critical rate.
\newblock 2015.

\bibitem{Mal14a}
Bastien Mallein.
\newblock Maximal displacement in a branching random walk through interfaces.
\newblock {\em Electron. J. Probab.}, 20:no. 68, 40, 2015.

\bibitem{Pey74}
Jacques Peyri{\`e}re.
\newblock Turbulence et dimension de {H}ausdorff.
\newblock {\em C. R. Acad. Sci. Paris S\'er. A}, 278:567--569, 1974.

\bibitem{Rob16}
Matthew~I. Roberts.
\newblock A simple path to asymptotics for the frontier of a branching
  {B}rownian motion.
\newblock {\em Ann. Probab.}, 41(5):3518--3541, 2013.

\bibitem{Sto65}
Charles Stone.
\newblock A local limit theorem for nonlattice multi-dimensional distribution
  functions.
\newblock {\em Ann. Math. Statist.}, 36:546--551, 1965.

\end{thebibliography}

\end{document}